\newtoks\prt 
  \numberwithin{equation}{section}
\newtheorem{thm}{Theorem}[section]
\newtheorem{ques}[thm]{Question} 
\newtheorem{lemma}[thm]{Lemma} 
\newtheorem{prop}[thm]{Proposition} 
\newtheorem{cor}[thm]{Corollary} 
\newtheorem{example}[thm]{Example}
\theoremstyle{definition} 
\newtheorem{example2}[thm]{Example}
\def\eqn#1$$#2$${\begin{equation}\label#1#2\end{equation}}
\def\B{\mathcal B} 
\def\C{\mathcal C}
\def\U{\mathcal U}
\def\M{\mathcal M}
\def\P{\mathcal{P}}
\def\co{\operatorname{conv}}
\def\en{\mathbb N} 
\def\er{\mathbb R} 
\def\qe{\mathbb Q} 
\def\ef{\mathbb F} 
\def\dist{\operatorname{dist}}
\def \Ienv{\operatorname{(I)-env}} 
\def \Icl{\operatorname{(I)-cl}} 
\def \Imn{\operatorname{(I)-mn}}
\def \Iw{\operatorname{(I)-ww}} 
\def \Ibl{\operatorname{(I)-bcl}} 
\def \Ibcl{\operatorname{(I)-cbcl}} 
\def \cl {\operatorname{cl}} 
\def \card {\operatorname{card}}
\def \Iccl {\operatorname{(I)-ccl}}
\def\spt{\operatorname{spt}} 
\def\lin{\operatorname{span}} 
\def\Ker{\operatorname{Ker}}
\def \reg {\partial _{\kern1pt\text{reg}}}
\def\ip#1#2{\left\langle#1,#2\right\rangle}
\def\di{\,\mbox{\rm d}}
\newcommand{\norm}[1]{\left\|#1\right\|}
\renewcommand{\Re}{\operatorname{Re}}
\newcommand{\wscl}[1]{\overline{#1}^{w^*}}
\newcommand{\abs}[1]{\left|#1\right|}
\newcommand{\setsep}{;\,}
\newcommand{\sesub}{\rotatebox{315}{$\hskip0pt\bigcup$}} 
\newcommand{\sesup}{\rotatebox{315}{$\bigcap$}}
\newcommand{\seeq}{\rotatebox{315}{$\hskip0pt\|$}} 
\begin{document}

\title{Topologies related to (I)-envelopes}

\author{Ond\v{r}ej F.K. Kalenda and Matias Raja}

\address{Charles University, Faculty of Mathematics and Physics, Department of
Mathematical Analysis, Sokolovsk{\'a} 86, 186 75 Praha 8, Czech Republic}
\email{kalenda@karlin.mff.cuni.cz}

\address{Departamento de Matem\'aticas, Universidad de Murcia, Campus de Espinardo, 30100 Espinardo, Murcia, Spain}
\email{matias@um.es}

\begin{abstract}
    We investigate the question whether the (I)-envelope of any subset of a dual to a Banach space $X$ may be described as the closed convex hull in a suitable topology. If $X$ contains no copy of $\ell^1$ then the weak topology generated by functionals of the first Baire class in the weak$^*$ topology works. On the other hand, if $X$ contains a complemented copy of $\ell^1$ or $X=\C([0,1])$ no locally convex topology works. If we do not require the topology to be locally convex, the problem is still open. We further introduce and compare several natural intermediate closure operators on a dual Banach space. Finally, we collect several intringuing open problems related to (I)-envelopes.
\end{abstract}

\keywords{(I)-envelope, space not containing $\ell^1$, closure operator}

\subjclass{46B10, 46A55, 46A22, 54A10}

\thanks{The first author was partially supported by the Research grant GA \v{C}R 23-04776S. The second author was partially supported by the Research grants: Fundaci\'on S\'eneca - ACyT Regi\'on de Murcia project 21955/PI/22; and Grant PID2021-122126NB-C32  funded by MCIN/AEI/ 10.13039/501100011033 and by “ERDF A way of making Europe”, by the EU}

\maketitle

\section{Introduction}

If $X$ is a Banach space and $A$ is any subset of its dual $X^*$, the \emph{(I)-envelope} of $A$ is defined by
$$\Ienv(A)= \bigcap\left\{\overline{\co\bigcup_{n=1}^\infty \wscl{\co A_n}}^{\norm{\cdot}}\setsep A=\bigcup_{n=1}^\infty A_n
\right\}.$$
This notion was introduced in \cite{Ienv-israel}, inspired by the notion of (I)-generation from \cite{FL}. 
The quite strange definition is related to the representation of a convex set in terms of a 
\emph{James boundary}, such as the extreme points. Indeed, the Choquet-Bishop-de Leeuw theorem (cf. \cite[Theorem 3.81]{LMNS}) establishes that any point of a convex compact set of locally convex space can be represented as the barycenter of a Borel probability measure concentrated on the set of extreme points in the following subtle sense: every countable cover of the extreme point by closed sets has full measure one. That clarifies the use a countable decomposition in our main definition.
Moreover,  
(I)-envelopes were used to analyze James' characterization of reflexivity in \cite{Ienv-studia} or to investigate a quantitative version of the Grothendieck property in \cite{HB-groth,lechner-1gr}.

One of the main advantages of the notion of (I)-envelope is provided by the fact that it can be viewed as a geometric counterpart of Simons' equality (see \cite[Lemma 2.1 and Remark 2.2]{Ienv-israel} or Lemma~\ref{L:separ} below).

The prefix (I) means \emph{intermediate} as it is easy to see that
\begin{equation}\label{eq:norm-Ienv-w*}
\overline 
{\co A}^{\norm{\cdot}}\subset\Ienv(A)\subset\wscl{
\co A}\end{equation}
for any $A\subset X^*$. In this paper we address the following natural question.

\begin{ques}\label{q1}
Let $X$ be a Banach space. Is there a (locally convex) topology $\tau$ on $X^*$ such that $\Ienv(A)=\overline{\co A}^\tau$ for each $A\subset X^*$?
\end{ques}

In some case the answer is easily seen to be positive. For example, if $X^*$ is separable, then $\tau$ can be the norm topology (cf. \cite[Remark 1.1(ii)]{Ienv-israel}). In this paper we give some results in the positive direction and some results in the negative direction.

We point out that the above question has two versions -- the locally convex one and the general one. It turns out that these two versions are quite different and require different methods of proofs.

In Section~\ref{sec:LCS} we address the locally convex setting. In this case the Hahn-Banach separation theorem provides a powerful tool to determine the topology. Indeed, using among others Lemma~\ref{L:separ} and the Hahn-Banach theorem we arrive to Proposition~\ref{P:uniqueness} which shows that there is essentially a unique candidate for such a topology. Moreover, this topology is described without any reference to (I)-envelopes, so the answer for a concrete space can be obtained by checking whether this canonical topology works or not. Therefore we are able to show that the answer is negative in general (see Example~\ref{ex:ell1}), but it is positive in some special cases (see Theorem~\ref{T:X not containing ell1}). 

Section~\ref{sec:topologies} is then devoted to the general case, without the requirement of local convexity. In this case the situation is different because we cannot use the Hahn-Banach theorem. It seems that the natural approach is to use suitable closure operators. In Section~\ref{sec:topologies} we define several natural closure operators
inspired by (I)-envelopes. It turns out that some canonical ones do not work (see Example~\ref{ex:Pd[0,1]}). Again,  there is an essentially unique candidate (see Theorem~\ref{T:characterization}). However, this closure operator is defined using (I)-envelopes  and hence it is not easy to work with. Therefore the general question remains open. 

While investigating the general case of Question~\ref{q1} we discovered there is a rich variety of `intermediate topologies' defined by natural closure operators. A large part of Section~\ref{sec:topologies} is devoted to analyzing and comparing them.

In the last section we collect several intriguing problems concerning (I)-envelopes, not necessarily related to Question~\ref{q1}.

\section{Preliminaries}

We start by recalling a key lemma characterizing the (I)-envelope via a separation by a bounded sequence. Note that the equivalence $(1)\Leftrightarrow(4)$ provides the above-mentioned relationship to the Simons equality.

\begin{lemma}\label{L:separ}
Let $X$ be a Banach space, $A\subset X^*$ and $\eta\in X^*$. The following assertions are equivalent.
\begin{enumerate}[$(1)$]
    \item $\eta\notin \Ienv(A)$.
    \item There is a sequence $(x_n)$ in $B_X$ such that
    $$\sup_{\xi\in A}\limsup_{n\to\infty}\Re\xi(x_n)<\inf_{n\in\en}\Re\eta(x_n).$$
    \item There is a sequence $(x_n)$ in $B_X$ such that
    $$\sup_{\xi\in A}\limsup_{n\to\infty}\Re\xi(x_n)<\liminf_{n\to\infty}\Re\eta(x_n).$$
    \item There is a sequence $(x_n)$ in $B_X$ such that
    $$\sup_{\xi\in A}\limsup_{n\to\infty}\Re\xi(x_n)<\limsup_{n\to\infty}\Re\eta(x_n).$$
\end{enumerate}
\end{lemma}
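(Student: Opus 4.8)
The plan is to prove the equivalence of the three separation conditions $(2)$--$(4)$ first, and only then to link them to $(1)$ through the two implications $(4)\Rightarrow(1)$ and $(1)\Rightarrow(4)$. Since $\inf_n\Re\eta(x_n)\le\liminf_n\Re\eta(x_n)\le\limsup_n\Re\eta(x_n)$ for any sequence, enlarging the right-hand side of the defining inequality can only make it easier to satisfy, so $(2)\Rightarrow(3)\Rightarrow(4)$ are immediate. For the reverse reduction $(4)\Rightarrow(2)$ I would simply pass to a subsequence: if $(x_n)\subset B_X$ witnesses $(4)$, set $s=\sup_{\xi\in A}\limsup_n\Re\xi(x_n)$, choose $c$ with $s<c<\limsup_n\Re\eta(x_n)$, and let $N=\{n\in\en:\Re\eta(x_n)>c\}$, which is infinite. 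Along $N$ one has $\inf_{n\in N}\Re\eta(x_n)\ge c$, whereas $\limsup_{n\in N}\Re\xi(x_n)\le\limsup_n\Re\xi(x_n)\le s<c$ for each $\xi\in A$; the reindexed subsequence therefore witnesses $(2)$. This makes $(2)$, $(3)$, $(4)$ equivalent, and it remains to connect them with $(1)$.

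For $(4)\Rightarrow(1)$ I would manufacture, from a witnessing sequence, a single decomposition of $A$ showing that $\eta\notin\Ienv(A)$. Keeping $s<c<t:=\limsup_n\Re\eta(x_n)$ as above, put $A_m=\{\xi\in A:\Re\xi(x_n)\le c\text{ for all }n\ge m\}$. Since $\limsup_n\Re\xi(x_n)\le s<c$ for every $\xi$, each $\xi$ lands in some $A_m$, so $A=\bigcup_m A_m$. For fixed $n\ge m$ the half-space $\{\mu:\Re\mu(x_n)\le c\}$ is weak$^*$-closed and convex and contains $A_m$, hence contains $\wscl{\co A_m}$, so $\Re\mu(x_n)\le c$ whenever $\mu\in\wscl{\co A_m}$ and $n\ge m$. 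A finite convex combination from $\bigcup_m\wscl{\co A_m}$ involves only indices $\le M$ for some $M$, giving $\Re\nu(x_n)\le c$ for $n\ge M$ and thus $\limsup_n\Re\nu(x_n)\le c$. Because $\|x_n\|\le 1$, the map $\mu\mapsto\limsup_n\Re\mu(x_n)$ is $1$-Lipschitz for the norm, so this bound survives the norm closure and $\limsup_n\Re\mu(x_n)\le c$ holds on $\overline{\co\bigcup_m\wscl{\co A_m}}^{\norm{\cdot}}$. As $\limsup_n\Re\eta(x_n)=t>c$, the point $\eta$ is excluded from this set, hence from $\Ienv(A)$.

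The remaining implication $(1)\Rightarrow(4)$ carries the real content. Starting from a decomposition $A=\bigcup_m A_m$ with $\eta\notin D:=\overline{\co\bigcup_m\wscl{\co A_m}}^{\norm{\cdot}}$, I would first refine it into bounded pieces: replacing each $A_m$ by the sets $A_m\cap kB_{X^*}$, $k\in\en$, only shrinks the pieces and hence shrinks $D$, so $\eta$ remains outside. After relabelling, each $C_m:=\wscl{\co A_m}$ is weak$^*$-compact by the Banach--Alaoglu theorem, the partial hulls $D_k:=\co(C_1\cup\dots\cup C_k)$ are weak$^*$-compact convex and increase with $\bigcup_k D_k$ norm-dense in $D$, and $\dist(\eta,D_k)\ge\dist(\eta,D)=:r>0$. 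If for each $k$ I can find $x_k\in B_X$ with $\Re\eta(x_k)-\sup_{\mu\in D_k}\Re\mu(x_k)\ge r/2$, then for $\xi\in A_m$ and $k\ge m$ one has $\xi\in D_k$ and so $\Re\xi(x_k)\le\Re\eta(x_k)-r/2$; passing to $\limsup$ over $k$ yields $\limsup_k\Re\xi(x_k)\le\limsup_k\Re\eta(x_k)-r/2$ uniformly in $\xi\in A$, which is precisely $(4)$.

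The one delicate point, and the main obstacle, is this \emph{quantitative} separation by an element of the predual: Hahn--Banach in $(X^*,w^*)$ separates $\eta$ from the weak$^*$-compact convex set $D_k$ by some $x_k\in X$, but a priori controls neither the normalization nor the size of the gap. I would obtain the quantitative estimate through support functions. Writing $h_C(x)=\sup_{\mu\in C}\Re\mu(x)$, suppose no $x\in B_X$ gives a gap $\ge\rho$; then $\Re\eta(x)\le h_{D_k}(x)+\rho\norm{x}=h_{D_k+\rho B_{X^*}}(x)$ for all $x\in X$. Since $D_k+\rho B_{X^*}$ is weak$^*$-compact and convex, it coincides with the intersection of the weak$^*$-closed half-spaces containing it, so the displayed inequalities force $\eta\in D_k+\rho B_{X^*}$ and hence $\dist(\eta,D_k)\le\rho$. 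Choosing $\rho$ just below $r$ contradicts $\dist(\eta,D_k)\ge r$, which produces the required $x_k$ with gap at least $r/2$. This argument uses only Alaoglu's theorem, additivity of support functions under Minkowski sums, and the bipolar description of weak$^*$-compact convex sets.
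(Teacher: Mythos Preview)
Your argument is correct. The equivalences $(2)\Leftrightarrow(3)\Leftrightarrow(4)$ and the implication $(4)\Rightarrow(1)$ are handled cleanly; the paper does not reprove these but cites \cite{Ienv-israel}, so you are supplying details the paper omits.

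For the main implication $(1)\Rightarrow(4)$ your approach and the paper's share the same core idea---a Hahn--Banach separation in $(X^*,w^*)$ with a quantitative gap $d/2$ coming from the norm-distance of $\eta$ to the approximating set---but differ in which side carries the compactness. You first intersect the pieces $A_m$ with balls so that the $C_m$ and the partial hulls $D_k$ become weak$^*$-compact, and then run a support-function/bipolar argument to conclude $\eta\in D_k+\rho B_{X^*}$ when no good separator exists. The paper avoids the bounded refinement altogether: it keeps the (possibly unbounded) weak$^*$-closed convex sets $\wscl{\co A_n}$ as they are and instead puts the compactness on the other side, separating the weak$^*$-compact ball $\eta+\tfrac{d}{2}B_{X^*}$ from $\wscl{\co A_n}$; the radius of the ball then yields the gap $d/2$ immediately after normalizing the separating $x_n$. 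Both routes use only Hahn--Banach and Alaoglu and reach the same conclusion, but the paper's version is a touch shorter since it skips the refinement step and the explicit support-function computation.
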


This lemma is proved in \cite[Lemma 2.1]{Ienv-israel} for real Banach spaces. The complex setting follows by looking at the complex Banach space as to a real one. We are going to give a proof of (1)$\Rightarrow$(4) using only the Hahn-Banach theorem (the proof given in \cite{Ienv-israel} uses a minimax theorem from \cite{simons-minimax}).

\begin{proof}[Proof of implication $(1)\Rightarrow(4)$.]
Assume $\eta\notin\Ienv(A)$. Then there is a sequence $(A_n)$ of sets such that $A_n\nearrow A$ and
$$\eta\notin\overline{\bigcup_{n=1}^\infty \wscl{\co A_n}}^{\norm{\cdot}}.$$
Hence, 
$d=\dist(\eta,\bigcup_{n=1}^\infty \wscl{\co A_n})>0$. For each $n\in\en$ we have $(\eta+\frac d2B_{X^*})\cap \wscl{\co A_n}=\emptyset$.
Since these two sets are convex and weak$^*$-closed, and, moreover, the first one is weak$^*$-compact, by the Hahn-Banach theorem we get $x_n\in B_X$ with
$$\inf \left\{\Re\xi(x_n)\setsep \xi\in \eta+\frac d2 B_{X^*}\right\}>\sup \{\Re\xi(x_n)\setsep \xi\in \wscl{\co A_n}\},$$
i.e.,
$$\Re\eta(x_n)-\frac d2>\sup_{\xi\in A_n}\Re\xi(x_n).$$

Let $\xi\in A$ be arbitrary. Then there is $n_0\in\en$ such that $\xi\in A_n$ for $n\ge n_0$. Then
$$\Re\xi(x_n)<\Re\eta(x_n)-\frac d2$$
for $n\ge n_0$. Thus
$$\limsup_{n\to\infty}\Re\xi(x_n)\le \limsup_{n\to\infty}\Re\eta(x_n)-\frac d2.$$
It follows that the inequality from (4) is fulfilled.
\end{proof}

As an easy consequence we obtain an improvement of \eqref{eq:norm-Ienv-w*}. To formulate it we need the following notation.
If $X$ is a Banach space, we set
\begin{equation}
    \begin{aligned}
    B_1(X)&=\{x^{**}\in X^{**}\setsep \exists (x_n)\mbox{ a sequence in }X: x_n\stackrel{w^*}{\longrightarrow}x^{**}\},\\
 C(X)&=\{x^{**}\in X^{**}\setsep\exists C\subset X\mbox{ countable}: x^{**}\in\wscl{C}\}.
    \end{aligned}
\end{equation}

Recall that, given a Banach space $X$ and a subspace $Y\subset X^*$, the symbol $\sigma(X,Y)$ denotes the \emph{weak topology on $X$ induced by $Y$}, i.e., the weakest topology on $X$ making all elements of $Y$ continuous. This topology is clearly locally convex and $Y$ is exactly the space of all $\sigma(X,Y)$-continuous functionals (cf. \cite[Proposition 3.22]{Fab-BST}).

\begin{prop}\label{P:Ienv between closures}
Let $X$ be a Banach space and $A\subset X^*$. Then
$$\overline{\co A}^{\sigma(X^*,C(X))}\subset\Ienv(A)\subset\overline{\co A}^{\sigma(X^*,B_1(X))}.$$
\end{prop}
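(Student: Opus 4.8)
The plan is to establish the two inclusions separately, each by showing that a functional outside the middle set can be separated in the appropriate way, using the characterization of the (I)-envelope provided by Lemma~\ref{L:separ}.

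For the right-hand inclusion $\Ienv(A)\subset\overline{\co A}^{\sigma(X^*,B_1(X))}$, I would argue by contrapositive: suppose $\eta\notin\overline{\co A}^{\sigma(X^*,B_1(X))}$. Since $\sigma(X^*,B_1(X))$ is a locally convex topology whose continuous functionals are exactly the elements of $B_1(X)$, the Hahn--Banach separation theorem yields some $x^{**}\in B_1(X)$ separating $\eta$ from the closed convex set $\overline{\co A}^{\sigma(X^*,B_1(X))}$, say $\sup_{\xi\in A}\Re x^{**}(\xi)<\Re x^{**}(\eta)$ after a harmless scaling. By definition of $B_1(X)$ there is a sequence $(x_n)$ in $X$ with $x_n\stackrel{w^*}{\to}x^{**}$; by uniform boundedness I may assume $(x_n)\subset r B_X$ for some $r$, and after rescaling $(x_n)\subset B_X$. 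Pointwise $w^*$-convergence gives $\Re\xi(x_n)\to\Re x^{**}(\xi)$ for each fixed $\xi$, so $\limsup_n\Re\xi(x_n)=\Re x^{**}(\xi)$ and likewise $\limsup_n\Re\eta(x_n)=\Re x^{**}(\eta)$. The separating inequality then reads $\sup_{\xi\in A}\limsup_n\Re\xi(x_n)<\limsup_n\Re\eta(x_n)$, which is exactly condition (4) of Lemma~\ref{L:separ}; hence $\eta\notin\Ienv(A)$. This proves the inclusion.

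For the left-hand inclusion $\overline{\co A}^{\sigma(X^*,C(X))}\subset\Ienv(A)$, I again argue by contrapositive: suppose $\eta\notin\Ienv(A)$. By Lemma~\ref{L:separ}, condition (1) gives condition (2): there is a sequence $(x_n)$ in $B_X$ with $\sup_{\xi\in A}\limsup_n\Re\xi(x_n)<\inf_n\Re\eta(x_n)$. The idea is to build an element of $C(X)$ realizing a separation. The natural candidate is a $w^*$-cluster point $x^{**}$ of $(x_n)$ in $X^{**}$; such a cluster point lies in $C(X)$ because it belongs to the $w^*$-closure of the countable set $\{x_n\}$. For any fixed $\xi\in A$ we have $\Re x^{**}(\xi)\le\limsup_n\Re\xi(x_n)$ at suitable cluster points, while for $\eta$ we want $\Re x^{**}(\eta)\ge\inf_n\Re\eta(x_n)$; choosing the cluster point carefully (or passing to a subnet along which $\Re\eta(x_n)$ is controlled) yields $\sup_{\xi\in A}\Re x^{**}(\xi)<\Re x^{**}(\eta)$, i.e., $x^{**}\in C(X)$ separates $\eta$ from $A$ and hence from $\overline{\co A}^{\sigma(X^*,C(X))}$.

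The delicate point in this second inclusion is that a single cluster point need not simultaneously satisfy $\Re x^{**}(\xi)\le\limsup_n\Re\xi(x_n)$ for \emph{all} $\xi\in A$ and keep $\Re x^{**}(\eta)$ large, since passing to a subnet can spoil uniformity over $\xi\in A$. The main obstacle is therefore to produce one element of $C(X)$ that works uniformly. I expect this is handled by taking $x^{**}$ to be a $w^*$-cluster point of $(x_n)$ along a free ultrafilter (a genuine cluster point of the whole sequence, so that $\Re x^{**}(\xi)\le\limsup_n\Re\xi(x_n)$ holds for every $\xi$ at once, while $\Re x^{**}(\eta)\ge\inf_n\Re\eta(x_n)$); then $x^{**}\in\wscl{\{x_n:n\in\en\}}\subset C(X)$ and the strict inequality from condition~(2) transfers directly, giving the separation.
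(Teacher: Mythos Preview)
Your proposal is correct and follows essentially the same route as the paper's proof: Hahn--Banach separation plus a $w^*$-convergent sequence for the right-hand inclusion, and a $w^*$-cluster point of the sequence from Lemma~\ref{L:separ}(2) for the left-hand inclusion.

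One clarification: the ``delicate point'' you flag is not actually delicate. If $x^{**}$ is \emph{any} $w^*$-cluster point of the sequence $(x_n)$ in $B_{X^{**}}$ (which exists by Banach--Alaoglu), then for \emph{every} $\xi\in X^*$ the value $\Re x^{**}(\xi)$ is a cluster value of the scalar sequence $(\Re\xi(x_n))_n$, hence automatically lies between its $\liminf$ and $\limsup$. So $\Re x^{**}(\xi)\le\limsup_n\Re\xi(x_n)$ holds for all $\xi\in A$ at once, and $\Re x^{**}(\eta)\ge\inf_n\Re\eta(x_n)$, with no need to choose the cluster point carefully or invoke an ultrafilter. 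The paper simply says ``let $x^{**}$ be any weak$^*$-cluster point'' and proceeds directly.
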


\begin{proof}
We will use Lemma~\ref{L:separ} and  the Hahn-Banach theorem.

Assume that $\eta\notin \Ienv(A)$. By Lemma~\ref{L:separ} there is a sequence $(x_n)$ in $B_X$ with
$$\sup_{\xi\in A}\limsup_{n\to\infty}\Re\xi(x_n)<\inf_{n\in\en}\Re\eta(x_n).$$
Let $x^{**}$ be any weak$^*$-cluster point of $(x_n)$ in $B_{X^{**}}$ (it exists by the Banach-Alaoglu theorem, cf. \cite[Theorem 3.37]{Fab-BST}). Then $x^{**}\in C(X)$. Moreover,
$$\Re x^{**}(\eta)\ge\inf_{n\in\en}\Re\eta(x_n)>\sup_{\xi\in A}\limsup_{n\to\infty}\Re\xi(x_n)\ge\sup_{\xi\in A}\Re x^{**}(\xi),$$
hence  $\eta\notin\overline{\co A}^{\sigma(X^*,C(X))}$. This completes the proof of the first inclusion.

Next suppose that $\eta\notin\overline{\co A}^{\sigma(X^*,B_1(X))}$. By the Hahn-Banach separation theorem there is $x^{**}\in B_1(X)$ such that
$$\Re x^{**}(\eta)>\sup_{\xi\in A}\Re x^{**}(\xi).$$
Since $x^{**}\in B_1(X)$, there is a sequence $(x_n)$ in $X$ with 
$x_n\stackrel{w^*}{\longrightarrow}x^{**}$. By the uniform boundedness principle this sequence is bounded (cf. \cite[Corollary 3.86]{Fab-BST}), so, up to multiplying it by a positive constant we may assume that $x_n\in B_X$ for $n\in\en$.
Then
$$\sup_{\xi\in A}\limsup_{n\to\infty}\Re\xi(x_n)=\sup_{\xi\in A}\Re x^{**}(\xi)<\Re x^{**}(\eta)=\limsup_{n\to\infty}\Re\eta(x_n),$$
hence $\eta\notin\Ienv(A)$ by Lemma~\ref{L:separ}. This completes the proof of the second inclusion.
\end{proof}

The next lemma provides a consequence of Lemma~\ref{L:separ} for linear subspaces of $X^*$.

\begin{lemma}\label{L:Ienv for linear subspace}
Let  $X$ be a Banach space.
\begin{enumerate}[$(a)$]
    \item Let $Y\subset X^*$ be a linear subspace. Then
    $$\Ienv(Y)=\{\eta\in X^*\setsep \forall (x_n)\mbox{ sequence in }B_X: x_n\stackrel{\sigma(X,Y)}{\longrightarrow}0\Rightarrow \eta(x_n)\to 0\}.$$
    \item Assume that $x^{**}\in X^{**}$. Then
    $$\Ienv(\ker x^{**})=\begin{cases}\ker x^{**}& x^{**}\in B_1(X)\\ X^*& x^{**}\notin B_1(X).\end{cases}$$
\end{enumerate}
\end{lemma}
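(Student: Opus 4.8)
The plan is to derive both parts from Lemma~\ref{L:separ}, exploiting the scaling invariance of a linear subspace. For part $(a)$, write $R$ for the set on the right-hand side and recall that, by the equivalence $(1)\Leftrightarrow(4)$ in Lemma~\ref{L:separ}, $\eta\notin\Ienv(Y)$ exactly when there is a sequence $(x_n)$ in $B_X$ with $\sup_{\xi\in Y}\limsup_n\Re\xi(x_n)<\limsup_n\Re\eta(x_n)$. The key observation is that when $Y$ is a linear subspace this separating inequality forces the sequence to be $\sigma(X,Y)$-null: since $t\xi\in Y$ for every $\xi\in Y$ and every scalar $t$, finiteness of $M:=\sup_{\xi\in Y}\limsup_n\Re\xi(x_n)$ (which holds, as $M$ lies between $0$ and $\limsup_n\Re\eta(x_n)\le\norm{\eta}$) forces $\limsup_n\Re(t\xi)(x_n)\le M$ for all $t$; letting $t\to\pm\infty$ along the reals gives $\Re\xi(x_n)\to0$, and applying this to $i\xi\in Y$ gives $\Im\xi(x_n)\to0$. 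Thus $x_n\stackrel{\sigma(X,Y)}{\longrightarrow}0$ and $M=0$.

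Granting this, part $(a)$ splits into two implications. If $\eta\notin\Ienv(Y)$, the separating sequence satisfies $x_n\stackrel{\sigma(X,Y)}{\longrightarrow}0$ while $\limsup_n\Re\eta(x_n)>M=0$, so $\eta(x_n)\not\to0$ and hence $\eta\notin R$. Conversely, if $\eta\notin R$, I pick a sequence $(x_n)$ in $B_X$ with $x_n\stackrel{\sigma(X,Y)}{\longrightarrow}0$ but $\eta(x_n)\not\to0$; passing to a subsequence along which $\betr{\eta(x_n)}\ge\delta>0$ and multiplying each $x_n$ by a suitable unimodular scalar (harmless for $\sigma(X,Y)$-convergence, as only $\betr{\xi(x_n)}$ matters), I may assume $\Re\eta(x_n)\ge\delta$. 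As $x_n\stackrel{\sigma(X,Y)}{\longrightarrow}0$ gives $\sup_{\xi\in Y}\limsup_n\Re\xi(x_n)=0<\delta\le\limsup_n\Re\eta(x_n)$, Lemma~\ref{L:separ} yields $\eta\notin\Ienv(Y)$. This proves $(a)$.

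For part $(b)$ I apply $(a)$ with $Y=\ker x^{**}$; the inclusion $\ker x^{**}\subset\Ienv(\ker x^{**})$ is automatic from \eqref{eq:norm-Ienv-w*}, and the case $x^{**}=0$ (where $\ker x^{**}=X^*$) is trivial and consistent with $0\in B_1(X)$, so I assume $x^{**}\ne0$ and fix $\eta_0\in X^*$ with $x^{**}(\eta_0)=1$, so that every $\eta\in X^*$ decomposes as $\eta=\xi+x^{**}(\eta)\eta_0$ with $\xi\in\ker x^{**}$. If $x^{**}\in B_1(X)$, take $(z_n)$ in $X$ with $z_n\stackrel{w^*}{\longrightarrow}x^{**}$; it is bounded, and after dividing by $M:=\sup_n\norm{z_n}>0$ it lies in $B_X$, is $\sigma(X,\ker x^{**})$-null (since $\xi(z_n)\to x^{**}(\xi)=0$ for $\xi\in\ker x^{**}$), yet has $\eta$-values tending to $x^{**}(\eta)/M\ne0$ whenever $\eta\notin\ker x^{**}$; thus such $\eta$ fail the condition defining $R$, giving $\Ienv(\ker x^{**})\subset\ker x^{**}$, i.e.\ equality. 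If instead $x^{**}\notin B_1(X)$, I must show $R=X^*$: take any $\eta\in X^*$ and any $(x_n)$ in $B_X$ with $x_n\stackrel{\sigma(X,\ker x^{**})}{\longrightarrow}0$; by the decomposition it suffices to prove $\eta_0(x_n)\to0$. If not, a subsequence has $\eta_0(x_{n_k})\to L\ne0$, and then $z_k:=x_{n_k}/\eta_0(x_{n_k})$ is bounded with $\eta(z_k)\to x^{**}(\eta)$ for every $\eta\in X^*$ (because $\xi(z_k)\to0$ for $\xi\in\ker x^{**}$ while $\eta_0(z_k)=1$), i.e.\ $z_k\stackrel{w^*}{\longrightarrow}x^{**}$, contradicting $x^{**}\notin B_1(X)$.

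I expect the renormalization trick in the last case to be the main point: rescaling a $\sigma(X,\ker x^{**})$-null sequence that fails to be weakly null by its $\eta_0$-values manufactures a bounded sequence in $X$ that is weak$^*$ convergent to $x^{**}$, which is exactly what $x^{**}\notin B_1(X)$ forbids. The remaining care is routine, namely the passage to subsequences, the unimodular rotations reducing everything to the real part, and checking that boundedness survives the normalizations.
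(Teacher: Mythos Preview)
Your proof is correct and follows essentially the same route as the paper. Part $(a)$ is identical in spirit: the paper compresses your scaling argument into the single observation that $\sup_{\xi\in Y}\limsup_n\Re\xi(x_n)$ equals $0$ or $+\infty$ according as $(x_n)$ is $\sigma(X,Y)$-null or not, and then invokes Lemma~\ref{L:separ}. For part $(b)$ the only minor difference is in the case $x^{**}\in B_1(X)$: the paper appeals to Proposition~\ref{P:Ienv between closures} (noting that $\ker x^{**}$ is $\sigma(X^*,B_1(X))$-closed), whereas you use part $(a)$ directly by exhibiting a witnessing sequence; your approach is slightly more self-contained, the paper's slightly shorter. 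The rescaling argument for $x^{**}\notin B_1(X)$ is the same in both.
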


\begin{proof}
$(a)$ This is a mild generalization of \cite[Lemma 1.2]{Ienv-studia}. We observe that
$$\sup_{\xi\in Y}\limsup_{n\to\infty}\Re\xi(x_n)=\begin{cases}0 & \mbox{ if }x_n\stackrel{\sigma(X,Y)}{\longrightarrow}0,\\+\infty &\mbox{ otherwise}
\end{cases}$$
and then use Lemma~\ref{L:separ}.

$(b)$ Assume first that $x^{**}\in B_1(X)$. Then $\ker x^{**}$ is $\sigma(X^*,B_1(X))$-closed, thus
$$\ker x^{**}\subset \Ienv(\ker x^{**})\subset \overline{\ker x^{**}}^{\sigma(X^*,B_1(X))}=\ker x^{**},$$
where the second inclusion follows from Proposition~\ref{P:Ienv between closures}.

Conversely, assume that $\Ienv(\ker x^{**})=\ker x^{**}$. If $x^{**}=0$, then $x^{**}\in B_1(X)$. So, assume $x^{**}\ne0$. Take $\eta\in X^*$ with $x^{**}(\eta)=1$. By $(a)$ there is a sequence $(x_n)$ is $B_X$ such that
$x_n\stackrel{\sigma(X,\ker x^{**})}{\longrightarrow}0$ but $\eta(x_n)\not\to0$. Hence, up to passing to a subsequence we may assume
that $\eta(x_n)\to c\ne0$. Then $\frac1c x_n\stackrel{w^*}{\longrightarrow}x^{**}$, hence $x^{**}\in B_1(X)$.
This completes the proof.
\end{proof}

\section{The case of locally convex topologies}\label{sec:LCS}

In this section we address the version of Question~\ref{q1} dealing with locally convex topologies. The first observation is the following consequence of Lemma~\ref{L:Ienv for linear subspace}.

\begin{cor}\label{cor:uniqueness}
Let $X$ be a Banach space.  Assume that there is a locally convex topology $\tau$ on $X^*$ such that $\Ienv(A)=\overline{\co A}^\tau$ for each $A\subset X^*$. Then $(X^*,\tau)^*=B_1(X)$. 
\end{cor}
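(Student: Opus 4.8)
The plan is to show that a locally convex topology $\tau$ representing the (I)-envelope as a $\tau$-closed convex hull must have its dual equal to $B_1(X)$, and I would do this by a two-sided inclusion on the level of dual spaces, exploiting the characterization of (I)-envelopes of linear subspaces (in particular of kernels of functionals from $X^{**}$) provided by Lemma~\ref{L:Ienv for linear subspace}(b).

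First I would recall that for any locally convex topology $\tau$ on $X^*$, the dual $(X^*,\tau)^*$ consists exactly of those $x^{**}\in X^{**}$ whose kernel is $\tau$-closed, or more precisely, a functional $x^{**}$ is $\tau$-continuous if and only if it is $\tau$-continuous as a linear functional; by standard locally convex duality this is equivalent to $\ker x^{**}$ being $\tau$-closed (for $x^{**}\ne 0$), and $0$ is trivially continuous. The key link is that for a linear subspace $Y$, $\tau$-closedness of $Y$ is the same as $\overline{\co Y}^\tau=Y$ (using that a subspace is convex and $\tau$-closed iff it equals its $\tau$-closed convex hull), and by hypothesis $\overline{\co Y}^\tau=\Ienv(Y)$.

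For the inclusion $B_1(X)\subset(X^*,\tau)^*$, I would take $x^{**}\in B_1(X)$ and apply Lemma~\ref{L:Ienv for linear subspace}(b) to get $\Ienv(\ker x^{**})=\ker x^{**}$. By the hypothesis this equals $\overline{\co(\ker x^{**})}^\tau$, so $\ker x^{**}$ is $\tau$-closed, whence $x^{**}$ is $\tau$-continuous, i.e. $x^{**}\in(X^*,\tau)^*$. The case $x^{**}=0$ is trivial. Conversely, for $(X^*,\tau)^*\subset B_1(X)$, I would take a nonzero $x^{**}\in(X^*,\tau)^*$; then $\ker x^{**}$ is $\tau$-closed, so $\Ienv(\ker x^{**})=\overline{\co(\ker x^{**})}^\tau=\ker x^{**}\ne X^*$. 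By the contrapositive of Lemma~\ref{L:Ienv for linear subspace}(b), the only way $\Ienv(\ker x^{**})$ can differ from $X^*$ is if $x^{**}\in B_1(X)$, which gives the desired conclusion.

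The main obstacle I anticipate is the careful passage between the two formulations of $\tau$-closedness, namely matching "$\ker x^{**}$ is $\tau$-closed" with "$x^{**}\in(X^*,\tau)^*$" and with "$\overline{\co(\ker x^{**})}^\tau=\ker x^{**}$". The first equivalence is a routine but genuinely needed fact of locally convex duality: a linear functional on a locally convex space is continuous precisely when its kernel is closed (equivalently, closed of codimension one). The second is immediate since a linear subspace is convex and balanced, so its $\tau$-closed convex hull is just its $\tau$-closure. Once these bookkeeping equivalences are in place, Lemma~\ref{L:Ienv for linear subspace}(b) does all the real work, and no further estimates or constructions are required.
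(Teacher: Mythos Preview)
Your approach is essentially the same as the paper's: both inclusions go through Lemma~\ref{L:Ienv for linear subspace}(b) via the equivalence between $\tau$-continuity of a functional and $\tau$-closedness of its kernel.

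There is one point to tighten. You assert at the outset that ``for any locally convex topology $\tau$ on $X^*$, the dual $(X^*,\tau)^*$ consists exactly of those $x^{**}\in X^{**}$ whose kernel is $\tau$-closed.'' As stated this is not true for an arbitrary locally convex $\tau$: a priori a $\tau$-continuous linear functional on $X^*$ need not be norm-continuous, so it need not lie in $X^{**}$, and Lemma~\ref{L:Ienv for linear subspace}(b) only applies to elements of $X^{**}$. The paper addresses this explicitly: once you know $\Ienv(\ker x^{**})=\ker x^{**}$, the inclusion \eqref{eq:norm-Ienv-w*} forces $\ker x^{**}$ to be norm-closed, hence $x^{**}\in X^{**}$. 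Insert this one line in your converse inclusion and the argument is complete.
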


\begin{proof}
If $x^{**}\in B_1(X)$, by Lemma~\ref{L:Ienv for linear subspace}$(b)$ we have $\Ienv(\ker x^{**})=\ker x^{**}$, 
hence $\ker x^{**}$ is $\tau$-closed, i.e., $x^{**}$ is $\tau$-continuous (cf. \cite[\S15.9.(1)]{kothe}).

Conversely, assume $x^{**}\in (X^*,\tau)^*$. Then $\ker x^{**}$ is $\tau$-closed, hence $\Ienv(\ker x^{**})=\ker x^{**}$. By \eqref{eq:norm-Ienv-w*} we deduce that $\ker x^{**}$ is norm-closed, hence $x^{**}\in X^{**}$. By Lemma~\ref{L:Ienv for linear subspace}$(b)$ we conclude that $x^{**}\in B_1(X)$.
\end{proof}

Using the previous corollary and Mazur's theorem (cf. \cite[Theorem 3.45]{Fab-BST}) we get the following equivalence.

\begin{prop}\label{P:uniqueness}
Let $X$ be a Banach space. The following assertions are equivalent.
\begin{enumerate}[$(1)$]
    \item There is a locally convex topology $\tau$ on $X^*$ such that $\Ienv(A)=\overline{\co A}^\tau$ for each $A\subset X^*$.
    \item $\Ienv(A)=\overline{\co A}^{\sigma(X^*,B_1(X))}$ for each $A\subset X^*$.
\end{enumerate}
\end{prop}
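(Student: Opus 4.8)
The plan is to prove the two implications separately, with essentially all of the content residing in $(1)\Rightarrow(2)$. The implication $(2)\Rightarrow(1)$ is immediate: the weak topology $\sigma(X^*,B_1(X))$ is locally convex, since $B_1(X)$ is a linear subspace of $X^{**}$ and $X\subset B_1(X)$, so $\sigma(X^*,B_1(X))$ is a genuine (Hausdorff) weak topology induced by a separating family of functionals; it may therefore serve as the topology $\tau$ required in $(1)$.

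For $(1)\Rightarrow(2)$, I would first feed the given locally convex topology $\tau$ into Corollary~\ref{cor:uniqueness}, which yields $(X^*,\tau)^*=B_1(X)$. Thus $\tau$ and $\sigma(X^*,B_1(X))$ are two locally convex topologies on $X^*$ sharing the same dual space $B_1(X)$. Since $\sigma(X^*,B_1(X))$ is the coarsest locally convex topology making all elements of $B_1(X)$ continuous, it is coarser than $\tau$; consequently $\overline{\co A}^\tau\subseteq\overline{\co A}^{\sigma(X^*,B_1(X))}$ for every $A\subset X^*$. The reverse inclusion is exactly where Mazur's theorem enters: the set $\overline{\co A}^\tau$ is convex and $\tau$-closed, and because $\tau$ and $\sigma(X^*,B_1(X))$ have the same continuous linear functionals, a convex set is $\tau$-closed if and only if it is $\sigma(X^*,B_1(X))$-closed. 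Hence $\overline{\co A}^\tau$ is $\sigma(X^*,B_1(X))$-closed and contains $\co A$, so it contains $\overline{\co A}^{\sigma(X^*,B_1(X))}$. Combining the two inclusions gives $\overline{\co A}^\tau=\overline{\co A}^{\sigma(X^*,B_1(X))}$, and since $\Ienv(A)=\overline{\co A}^\tau$ by the hypothesis of $(1)$, assertion $(2)$ follows.

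The only delicate point is the use of Mazur's theorem in the form asserting that, for convex sets, $\tau$-closedness and $\sigma(X^*,B_1(X))$-closedness coincide. This is the standard consequence of the Hahn--Banach separation theorem that the family of closed convex sets in a locally convex space depends only on the underlying dual pair: any closed convex set equals the intersection of the closed half-spaces containing it, and these half-spaces are determined precisely by the continuous functionals, which are the same for both topologies (cf. \cite[Theorem 3.45]{Fab-BST}). Everything else in the argument is bookkeeping about the comparison of the two topologies, and no input beyond Corollary~\ref{cor:uniqueness} is required.
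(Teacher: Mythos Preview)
Your proof is correct and follows exactly the approach indicated in the paper: apply Corollary~\ref{cor:uniqueness} to obtain $(X^*,\tau)^*=B_1(X)$, and then invoke Mazur's theorem to conclude that the closed convex hulls with respect to $\tau$ and $\sigma(X^*,B_1(X))$ coincide. The paper compresses this into a single sentence, but the content is the same.
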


We continue by a partial positive answer to Question~\ref{q1}.

\begin{thm}\label{T:X not containing ell1}
Assume that $X$ is a Banach space not containing an isomorphic copy of $\ell^1$. Then $B_1(X)=C(X)$, hence for any set $A\subset X^*$ we have
$$\Ienv{A}=\overline{\co A}^{\sigma(X^*,B_1(X))}.$$
\end{thm}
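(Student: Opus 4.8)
The plan is to prove the theorem in two logically separate pieces: first the set-theoretic identity $B_1(X)=C(X)$ under the hypothesis that $X$ contains no copy of $\ell^1$, and then to deduce the displayed formula for $\Ienv(A)$ directly from Proposition~\ref{P:Ienv between closures}.

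For the inclusion $B_1(X)\subset C(X)$, note this holds for \emph{any} Banach space and requires no hypothesis: if $x^{**}\in B_1(X)$ then there is a sequence $(x_n)$ in $X$ with $x_n\stackrel{w^*}{\to}x^{**}$, so taking $C=\{x_n\setsep n\in\en\}$ we have $x^{**}\in\wscl C$ and $C$ is countable, whence $x^{**}\in C(X)$. The substantive content is the reverse inclusion $C(X)\subset B_1(X)$, and this is exactly where the Rosenthal $\ell^1$-theorem enters. Suppose $x^{**}\in C(X)$, so $x^{**}\in\wscl C$ for some countable $C=\{c_n\setsep n\in\en\}\subset X$. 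Since $X$ contains no isomorphic copy of $\ell^1$, the same is true of the separable closed subspace $Z=\clin C$, and by Rosenthal's theorem every bounded sequence in $Z$ has a weakly Cauchy subsequence. I would first check that $C$ is bounded (a point of $\wscl C$ being a continuous linear functional, the set $C$ must lie in a multiple of the ball once we restrict attention appropriately; more carefully, $x^{**}$ being a bounded functional forces a bounded countable subset whose weak$^*$-closure still contains $x^{**}$, so one may assume $C\subset rB_X$ for some $r$).

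The heart of the argument is then the following: I want to produce an actual \emph{sequence} from $C$ that converges weak$^*$ to $x^{**}$, which exhibits $x^{**}\in B_1(X)$. The difficulty is that $x^{**}$ lies in the weak$^*$-closure of $C$ but need not be the weak$^*$-limit of any naive enumeration; the weak$^*$ topology on $rB_{X^{**}}$ is generally not metrizable, so closure does not give sequential convergence for free. This is the main obstacle, and it is overcome precisely by the no-$\ell^1$ hypothesis. Using Rosenthal's theorem applied to bounded sequences drawn from $C$, every such sequence has a weakly Cauchy subsequence; a weakly Cauchy sequence in $X$ is automatically weak$^*$-convergent in $X^{**}$ (its weak$^*$-limit being the unique cluster point, which lies in $B_1(X)$ by definition). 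The task is to select a subsequence of points of $C$ whose (unique) weak$^*$-limit is the prescribed point $x^{**}$ rather than some other cluster point. I expect to argue by a diagonal/approximation scheme: since $x^{**}\in\wscl C$, for each finite collection of functionals $\eta_1,\dots,\eta_k\in X^*$ and each $\ep>0$ one can find a point of $C$ approximating the values $x^{**}(\eta_i)$ within $\ep$; iterating along a suitable countable dense-type family and extracting a weakly Cauchy subsequence via Rosenthal, the resulting limit is forced to agree with $x^{**}$ on a separating set of functionals, hence to equal $x^{**}$.

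Once $B_1(X)=C(X)$ is established, the conclusion is immediate: Proposition~\ref{P:Ienv between closures} gives the sandwich
$$\overline{\co A}^{\sigma(X^*,C(X))}\subset\Ienv(A)\subset\overline{\co A}^{\sigma(X^*,B_1(X))}$$
for every $A\subset X^*$, and since the two functional families coincide the outer topologies coincide, collapsing both inclusions to the equality $\Ienv(A)=\overline{\co A}^{\sigma(X^*,B_1(X))}$. So after the set equality the theorem follows with no further work. I anticipate that the only genuinely delicate step is the selection argument showing that membership in the countable weak$^*$-closure can be realized by a convergent \emph{sequence} under the no-$\ell^1$ assumption; everything else is bookkeeping or a direct appeal to Proposition~\ref{P:Ienv between closures}.
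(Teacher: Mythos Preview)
Your overall architecture---reduce to the identity $B_1(X)=C(X)$ via a separable subspace generated by the countable set $C$, then invoke Proposition~\ref{P:Ienv between closures}---is exactly what the paper does. The difference is in how the hard inclusion $C(X)\subset B_1(X)$ is dispatched. The paper simply sets $Y=\overline{\lin C}$, notes that $\wscl{Y}$ is canonically identified with $Y^{**}$, and cites the Odell--Rosenthal theorem (for a \emph{separable} space without $\ell^1$ one has $B_1(Y)=Y^{**}$) to conclude $x^{**}\in B_1(Y)\subset B_1(X)$. That is the entire argument.

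Your attempt to build the sequence by hand from Rosenthal's $\ell^1$ theorem plus a ``diagonal/approximation scheme'' is essentially an attempt to reprove Odell--Rosenthal, and the sketch has real gaps. First, the boundedness paragraph is not convincing: $x^{**}\in\wscl{C}$ with $x^{**}$ bounded does \emph{not} force $C$ (or any countable subset with $x^{**}$ in its weak$^*$-closure) to be bounded; the paper sidesteps this entirely by passing to $Y^{**}$. Second, and more seriously, the ``countable dense-type family'' of functionals you need does not exist in general: $Z^*$ need not be norm-separable, and a countable family that merely separates points of $Z$ will not separate points of $Z^{**}$, so agreement of your weakly Cauchy limit with $x^{**}$ on such a family does not force equality. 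What Rosenthal's theorem gives you is that $B_Z$ is relatively \emph{sequentially} weak$^*$-compact in $Z^{**}$; concluding that every point of $\wscl{B_Z}$ is then a sequential limit is precisely the angelicity statement that constitutes the Odell--Rosenthal theorem, and it requires a genuine Baire-class-one argument on the Polish space $(B_{Z^*},w^*)$, not just diagonal extraction. Cite Odell--Rosenthal directly and the proof is two lines.
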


\begin{proof}
The formula for the (I)-envelope follows from the equality $B_1(X)=C(X)$ and Proposition~\ref{P:Ienv between closures}. So, it is enough to prove the equality. Since obviously $B_1(X)\subset C(X)$, it suffices to prove the converse inclusion. To this end fix $x^{**}\in C(X)$. It follows that there is a countable set $C\subset X$ with $x^{**}\in\wscl{C}$. Set
$Y=\overline{\lin C}$. Then $Y$ is a separable subspace of $X$, hence it is a separable Banach space not containing an isomorphic copy of $\ell^1$. By the main result of \cite{odell-rosenthal} we have $B_1(Y)=Y^{**}$. Since $Y^{**}$ is canonically identified with $\wscl Y$ and $x^{**}\in\wscl Y$, we deduce that $x^{**}\in B_1(Y)\subset B_1(X)$.
\end{proof}

The next result is the basic counterexample to the `locally convex' variant of Question~\ref{q1}.

\begin{example}\label{ex:ell1}
There is no locally convex topology $\tau$ on $(\ell^1)^*$ such that $\Ienv(A)=\overline{\co A}^\tau$ for each $A\subset (\ell^1)^*$.
\end{example}

\begin{proof}
Recall that the space $\ell^1$ is weakly sequentially complete (cf. \cite[Theorem 5.36]{Fab-BST}), i.e., $B_1(\ell^1)=\ell^1$. Hence the topology $\sigma((\ell^1)^*,B_1(\ell^1))$ coincides with the weak$^*$-topology on $(\ell^1)^*$.

Recall further that $(\ell^1)^*=\ell^\infty$ (cf. \cite[Proposition 2.16]{Fab-BST}). Let $A=c_0$ be the canonical copy of $c_0$ in $\ell^\infty$. Then $\wscl{A}=\ell^\infty$ (by the Goldstine theorem \cite[Theorem 3.96]{Fab-BST}) but $\Ienv{A}=A=c_0$ as $A$ is separable. Hence we conclude using Corollary~\ref{cor:uniqueness}.
\end{proof}

One of the important tools in the previous example was the weak sequential completeness of $\ell^1$. It turns out that this may be used to provide a counterexample in a more general situation.

\begin{prop}
Let $X$ be a weakly sequentially complete Banach space.
The following are equivalent.
\begin{enumerate}[$(1)$]
    \item There is a locally convex topology $\tau$ on $X^*$ such that $\Ienv(A)=\overline{\co A}^\tau$ for each $A\subset X^*$.
    \item Any closed norm-separable subspace of $X^*$ is weak$^*$-closed.
    \item $X$ is reflexive.
\end{enumerate}
\end{prop}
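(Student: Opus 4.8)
The plan is to establish the cycle $(3)\Rightarrow(1)\Rightarrow(2)\Rightarrow(3)$, after first recording the role of weak sequential completeness. Since a sequence in $X$ that converges weak$^*$ in $X^{**}$ is weakly Cauchy, hence (by hypothesis) weakly convergent in $X$, its weak$^*$-limit already lies in $X$; thus $B_1(X)=X$, so $\sigma(X^*,B_1(X))$ is the weak$^*$-topology and, by Proposition~\ref{P:uniqueness}, assertion $(1)$ is equivalent to $\Ienv(A)=\wscl{\co A}$ for every $A\subset X^*$. For $(3)\Rightarrow(1)$ I would note that if $X$ is reflexive the weak$^*$- and weak topologies on $X^*$ coincide, so by Mazur's theorem $\wscl{\co A}=\ov{\co A}^{\norm{\cdot}}$; squeezing this between the two sides of \eqref{eq:norm-Ienv-w*} yields $\Ienv(A)=\ov{\co A}^{\norm{\cdot}}$, i.e. the norm topology witnesses $(1)$.

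For $(1)\Rightarrow(2)$ the key input is that a norm-separable $A\subset X^*$ always satisfies $\Ienv(A)=\ov{\co A}^{\norm{\cdot}}$ (this underlies Example~\ref{ex:ell1}). I would deduce it from Lemma~\ref{L:separ}: given $\eta\notin\ov{\co A}^{\norm{\cdot}}$, pass to the separable subspace $S=\ov{\lin}(A\cup\{\eta\})$, separate $\eta$ from $\ov{\co A}^{\norm{\cdot}}$ by a Hahn--Banach functional $\Lambda\in B_{S^*}$, and use that $B_X$ is norming for $S$ together with the weak$^*$-metrizability of $B_{S^*}$ to realize $\Lambda$ as a weak$^*$-limit of a sequence $(x_n)$ in $B_X$; this sequence realizes condition $(3)$ of Lemma~\ref{L:separ}, so $\eta\notin\Ienv(A)$. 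Granting this, for a norm-closed norm-separable subspace $W$ I apply it with $A=W$ to get $\Ienv(W)=W$, and combine with $(1)$ (giving $\Ienv(W)=\wscl W$) to conclude $W=\wscl W$.

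The substance is $(2)\Rightarrow(3)$, which I would prove contrapositively. If $X$ is not reflexive then it must contain an isomorphic copy of $\ell^1$: otherwise Rosenthal's theorem would give every bounded sequence a weakly Cauchy subsequence, which by weak sequential completeness converges weakly, making $B_X$ weakly sequentially compact and $X$ reflexive by the Eberlein--\v Smulian theorem. Fix an isomorphism $T\colon\ell^1\to X$ onto its range; its adjoint $q=T^*\colon X^*\to(\ell^1)^*=\ell^\infty$ is a surjective quotient map that is weak$^*$-to-weak$^*$ continuous. I then transplant the obstruction $c_0\subset\ell^\infty$ upstairs: using openness of $q$, choose bounded lifts $\phi_n\in X^*$ of the truncated units $s_n=(1,\dots,1,0,\dots)$ with $\norm{\phi_n}\le C$, and set $W=\ov{\lin}\{\phi_n:n\in\en\}$, a norm-separable norm-closed subspace. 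By Banach--Alaoglu the bounded sequence $(\phi_n)$ has a weak$^*$-cluster point $\phi\in\wscl W$; since $q$ is weak$^*$-continuous, $q(\phi)$ is a weak$^*$-cluster point of $(s_n)$, and because $s_n\stackrel{w^*}{\longrightarrow}(1,1,\dots)$ this forces $q(\phi)=(1,1,\dots)\notin c_0$. As $q(W)\subset\ov{\lin}\{s_n\}=c_0$, we get $\phi\notin W$, so $W$ is norm-separable, norm-closed, but not weak$^*$-closed, contradicting $(2)$.

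I expect the delicate point to be exactly this last construction: manufacturing a witness that is simultaneously separable and genuinely non-weak$^*$-closed. The move that makes it work — and that sidesteps any need for $\ell^1$ to be complemented in $X$ — is to argue through a weak$^*$-\emph{cluster} point of the lifts rather than a weak$^*$-limit, exploiting that $(s_n)$ has the single cluster point $(1,1,\dots)$ downstairs; separability is then automatic from the countable generating set $\{\phi_n\}$, and the weak$^*$-continuity of $q$ transports the separation witnessed in $\ell^\infty$ back up to $X^*$.
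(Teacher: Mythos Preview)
Your argument is correct. The steps $(3)\Rightarrow(1)$ and $(1)\Rightarrow(2)$ match the paper's proof (you add a self-contained sketch of the fact $\Ienv(A)=\ov{\co A}^{\norm{\cdot}}$ for norm-separable $A$, which the paper simply cites).

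Where you genuinely diverge is $(2)\Rightarrow(3)$. The paper's proof of this implication does \emph{not} use weak sequential completeness at all: it shows directly that condition $(2)$ forces every closed separable subspace $Y\subset X^*$ to be reflexive. Since $Y$ is weak$^*$-closed by $(2)$, it is a dual space $Y=(X/Y_\perp)^*$; if $Y$ were not reflexive one could pick $z^{**}\in Y^{**}\setminus(X/Y_\perp)$, and then $\ker z^{**}$ would be a closed separable hyperplane of $Y$ that is weak$^*$-dense in $Y$ yet, by $(2)$ again, weak$^*$-closed in $X^*$ --- a contradiction. Reflexivity of $X^*$ (hence of $X$) then follows from Eberlein--\v{S}mulian.

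Your route instead exploits the hypothesis: Rosenthal's theorem plus weak sequential completeness give an isomorphic copy of $\ell^1$ in $X$, and you transplant the $c_0\subset\ell^\infty$ obstruction through the adjoint $T^*$ by lifting the truncated units and taking a weak$^*$-cluster point. This is a perfectly valid alternative and has the virtue of producing an explicit separable non-weak$^*$-closed subspace, but it costs you Rosenthal's theorem and ties the implication $(2)\Rightarrow(3)$ to the weak sequential completeness assumption, whereas the paper's argument shows that $(2)\Rightarrow(3)$ is in fact true for \emph{every} Banach space $X$.
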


\begin{proof}
$(3)\Rightarrow(1)$ This follows for example from \eqref{eq:norm-Ienv-w*} ($\tau$ may be the norm topology).

$(1)\Rightarrow(2)$ Since $X$ is weakly sequentially complete, we get $B_1(X)=X$, hence by Proposition~\ref{P:uniqueness} we may assume $\tau=w^*$.
Since $\Ienv{Y}=Y$ for any closed separable subspace of $X^*$, we deduce that any closed separable subspace of $X^*$ is also weak$^*$-closed. 

$(2)\Rightarrow(3)$ To prove that $X$ is reflexive it is enough to show that $X^*$ is reflexive (cf. \cite[Proposition 3.112]{Fab-BST}).
It follows from Eberlein-\v{S}mulyan theorem that it suffices to prove that any separable subspace of $X^*$ is reflexive  (cf. \cite[Theorems 3.109 and 3.111]{Fab-BST}).

So fix a closed separable subspace $Y\subset X^*$. By the assumption we know that $Y$ is weak$^*$-closed, so $Y=Z^*$ where $Z=X/Y_\perp$ (this follows by combining \cite[Proposition 2.6]{Fab-BST} with the bipolar theorem \cite[Theorem 3.38]{Fab-BST}). If $Y$ is not reflexive, there is some $z^{**}\in Z^{**}\setminus Z$. Then $\ker z^{**}$ is a weak$^*$-dense subpace of $Z^*=Y$. On the other hand, $\ker z^{**}$ is a closed separable subspace of $X^*$, so it is weak$^*$-closed by the assumption.
This is a contradiction. (Note that on $Y$ the topologies $\sigma(Z^*,Z)$ and $\sigma(X^*,X)$ coincide.)
\end{proof}

Another generalization of Example~\ref{ex:ell1} can be proved using the following easy statement.

\begin{lemma}\label{L:complemented}
Let $X$ be a Banach space. Let $Y\subset X$ be a complemented subspace and let $P:X\to Y$ be a projection witnessing it. Let $P^*:Y^*\to X^*$ be the adjoint map. Then the following assertions hold.
\begin{enumerate}[$(a)$]
    \item $P^*$is an isomporhic embedding, which is, moreover,  weak$^*$-to-weak$^*$   and $\sigma(Y^*,B_1(Y))$-to-$\sigma(X^*,B_1(X))$ homeomorphic.
    \item $\Ienv{P^*(A)}=P^*(\Ienv{A})$ for each $A\subset Y^*$.
\end{enumerate}
\end{lemma}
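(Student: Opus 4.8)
The plan is to handle the two parts separately: I will derive (a) from standard properties of adjoint operators together with the behaviour of $B_1(\cdot)$ under the bi-adjoint $P^{**}$, and then deduce (b) formally from (a) by checking that $P^*$ commutes with each operation entering the definition of the (I)-envelope.

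For (a), write $j\colon Y\to X$ for the inclusion and $R=j^*\colon X^*\to Y^*$ for the restriction map, so that $R\circ P^*=\mathrm{id}_{Y^*}$ because $P$ fixes $Y$ pointwise. Since $\|R\|\le 1$, this identity forces $\|\phi\|\le\|P^*\phi\|\le\|P\|\,\|\phi\|$, so $P^*$ is an isomorphic embedding; as $P^*$ and $R$ are both adjoint maps, they are weak$^*$-to-weak$^*$ continuous, and $R$ is a weak$^*$-continuous left inverse of $P^*$, whence $P^*$ is a weak$^*$-homeomorphism onto its range. (Equivalently, $(jP)^*=P^*R$ is a weak$^*$-continuous projection of $X^*$ onto $P^*(Y^*)$, so this range is both norm- and weak$^*$-closed — a fact I will reuse in (b).)

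The heart of (a) is the identity
$$P^{**}(B_1(X))=B_1(Y).$$
To prove ``$\subseteq$'' I take $x^{**}\in B_1(X)$ with $x_n\stackrel{w^*}{\longrightarrow}x^{**}$, $x_n\in X$, and check, using $\langle P^{**}x^{**},\phi\rangle=\langle x^{**},P^*\phi\rangle=\lim_n(P^*\phi)(x_n)=\lim_n\phi(Px_n)$, that $Px_n\stackrel{w^*}{\longrightarrow}P^{**}x^{**}$ in $Y^{**}$, so $P^{**}x^{**}\in B_1(Y)$. For ``$\supseteq$'' I take $y^{**}\in B_1(Y)$ with $y_n\stackrel{w^*}{\longrightarrow}y^{**}$, $y_n\in Y$; viewing the $y_n$ inside $X$ and testing against $\psi\in X^*$ shows $y_n\stackrel{w^*}{\longrightarrow}j^{**}y^{**}$ in $X^{**}$, so $j^{**}y^{**}\in B_1(X)$, while $P\circ j=\mathrm{id}_Y$ gives $P^{**}\circ j^{**}=\mathrm{id}_{Y^{**}}$ and hence $y^{**}=P^{**}(j^{**}y^{**})$. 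With this identity in hand, the equivalence ``$\phi_\alpha\to\phi$ in $\sigma(Y^*,B_1(Y))$ iff $P^*\phi_\alpha\to P^*\phi$ in $\sigma(X^*,B_1(X))$'' follows at once from $\langle P^{**}x^{**},\phi\rangle=\langle x^{**},P^*\phi\rangle$, which is exactly the asserted homeomorphism onto the range. I expect the identity $P^{**}(B_1(X))=B_1(Y)$ to be the main technical point of the whole lemma.

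For (b), set $Z=P^*(Y^*)$, a norm- and weak$^*$-closed subspace of $X^*$ by the remark in (a), and recall from (a) that $P^*\colon Y^*\to Z$ is simultaneously a norm-homeomorphism and a weak$^*$-homeomorphism onto $Z$. Because $P^*$ is linear and injective it commutes with convex hulls, with countable unions, and (by injectivity) with arbitrary intersections; because it is a homeomorphism onto the closed subspace $Z$ for each of the two topologies, it commutes with norm-closure and with weak$^*$-closure, i.e. $P^*(\overline{S}^{\|\cdot\|})=\overline{P^*S}^{\|\cdot\|}$ and $P^*(\wscl{S})=\wscl{P^*S}$ for $S\subset Y^*$ (all these sets remain inside $Z$, so the ambient closures agree with those taken in $Z$). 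Finally, since $P^*$ restricts to a bijection of $Y^*$ onto $Z\supset P^*(A)$, the decompositions $A=\bigcup_n A_n$ correspond bijectively, via $P^*$, to the decompositions $P^*(A)=\bigcup_n B_n$. Applying the commutation relations term by term turns each building block $\overline{\co\bigcup_n\wscl{\co A_n}}^{\|\cdot\|}$ into its image $\overline{\co\bigcup_n\wscl{\co B_n}}^{\|\cdot\|}$ under $P^*$, and intersecting over all decompositions (using that $P^*$ preserves intersections) yields $\Ienv{P^*(A)}=P^*(\Ienv{A})$.
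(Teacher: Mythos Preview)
Your proof is correct and follows essentially the same approach as the paper. The paper works with the explicit direct sum $X\cong Y\oplus Z$ (so that $P^*(\eta)=(\eta,0)$ and the key $B_1$-fact reads $B_1(Y\oplus Z)\cap(Y^{**}\times\{0\})=B_1(Y)\times\{0\}$), whereas you express the same content through the adjoint pair $P^*,\,R=j^*$ and prove $P^{**}(B_1(X))=B_1(Y)$ directly; your part (b) spells out in detail what the paper compresses into the sentence ``follows easily from the definition using that $P^*$ is a linear injection which is a homeomorphism both in norm and weak$^*$ and has weak$^*$-closed range''.
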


\begin{proof}
We start by observing that the space $X$ is canonically isomorphic to $Y\oplus Z$, where $Z=\ker P$. In this representation we have $P(y,z)=y$ for $(y,z)\in Y\oplus Z$. Then $X^*$ is isomorphic to $Y^*\oplus Z^*$, where $P^*:Y^*\to X^*$ is defined by $\eta\mapsto(\eta,0)$. Further, $X^{**}$ is canonically isomorphic to $Y^{**}\oplus Z^{**}$.

Now we are ready to prove all the statements.

$(a)$ By the above representation $P^*$ is an isomorphic embedding. It is a weak$^*$-to-weak$^*$ homeomorphism since  $(X^*,w^*)$ is obviously homeomorphic to $(Y^*,w^*)\times (Z^*,w^*)$.

Finally, since clearly $B_1(Y\oplus Z)\cap Y^{**}\times\{0\}=B_1(Y)\times\{0\}$, we deduce that $P^*$ is also a $\sigma(Y^*,B_1(Y))$-to-$\sigma(X^*,B_1(X))$ homeomorphism.

$(b)$ This follows easily from the definition of the (I)-envelope using the fact that $P^*$ is a linear injection which is a homeomorphism both in the norm and in the weak$^*$-topologies and, morerover, its range is weak$^*$-closed.
\end{proof}

Combining Example~\ref{ex:ell1} with Lemma~\ref{L:complemented} we immediately deduce the following result.

\begin{prop}\label{P:complemented ell1}
Let $X$ be a Banach space which contains a complemented isomorphic copy of $\ell^1$. Then there is a closed separable subspace $Y\subset X^*$ such that
$$Y\subsetneqq\overline{Y}^{\sigma(X^*,B_1(X))}.$$
In particular, there is no locally convex topology $\tau$ on $X^*$ such that $\Ienv(A)=\overline{\co A}^\tau$ for each $A\subset X^*$.
\end{prop}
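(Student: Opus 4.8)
The plan is to transport the counterexample of Example~\ref{ex:ell1} from $(\ell^1)^*$ into $X^*$ by means of the adjoint projection studied in Lemma~\ref{L:complemented}; the phrase ``we immediately deduce'' reflects that, once the right objects are named, everything reduces to the homeomorphism and closedness properties recorded there.

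First I would fix a complemented subspace $W\subset X$ isomorphic to $\ell^1$, together with a projection $P\colon X\to W$, and pass to the adjoint $P^*\colon W^*\to X^*$. Since $W\cong\ell^1$ and all the notions involved ($\Ienv$, $B_1$, weak sequential completeness, separability, and the norm- and weak$^*$-closures) are preserved under isomorphisms, the computation of Example~\ref{ex:ell1} produces inside $W^*$ a closed separable subspace $Y_0$ (the one corresponding to $c_0\subset\ell^\infty$ under $W^*\cong(\ell^1)^*=\ell^\infty$) with
$$\Ienv(Y_0)=Y_0\subsetneq\overline{Y_0}^{\sigma(W^*,B_1(W))}.$$
Here $B_1(W)=W$ by weak sequential completeness of $\ell^1$, so $\sigma(W^*,B_1(W))$ is the weak$^*$ topology, the strict inclusion being the Goldstine statement $\overline{c_0}^{w^*}=\ell^\infty$ transported to $W^*$, while $\Ienv(Y_0)=Y_0$ holds because $Y_0$ is a closed separable subspace.

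Next I would set $Y:=P^*(Y_0)\subset X^*$ and check the two required properties. By Lemma~\ref{L:complemented}(a) the map $P^*$ is an isomorphic embedding, so $Y$ is a closed separable subspace of $X^*$, and by Lemma~\ref{L:complemented}(b),
$$\Ienv(Y)=\Ienv\bigl(P^*(Y_0)\bigr)=P^*\bigl(\Ienv(Y_0)\bigr)=P^*(Y_0)=Y.$$
The core of the argument is the computation of the $\sigma(X^*,B_1(X))$-closure of $Y$. For this I would use that $P^*$ is a $\sigma(W^*,B_1(W))$-to-$\sigma(X^*,B_1(X))$ homeomorphism onto its range $R=P^*(W^*)$ and that $R$ is weak$^*$-closed (both from Lemma~\ref{L:complemented}). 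Since $X\subset B_1(X)$, the topology $\sigma(X^*,B_1(X))$ is finer than the weak$^*$ topology, hence $R$ is also $\sigma(X^*,B_1(X))$-closed; therefore the closure of $Y$ stays inside $R$, and transporting through the homeomorphism gives
$$\overline{Y}^{\sigma(X^*,B_1(X))}=P^*\bigl(\overline{Y_0}^{\sigma(W^*,B_1(W))}\bigr)\supsetneq P^*(Y_0)=Y,$$
which is the first assertion.

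I expect the main obstacle to be exactly this transfer of the closure through $P^*$: one must guarantee that the $\sigma(X^*,B_1(X))$-closure does not escape the range $R$ --- this is why the weak$^*$-closedness of $R$, combined with $\sigma(X^*,B_1(X))$ being finer than $w^*$, is indispensable --- and then that the relative $\sigma(X^*,B_1(X))$-topology on $R$ matches $\sigma(W^*,B_1(W))$ under $P^*$, so that taking closures commutes with $P^*$. Everything else is bookkeeping. Finally, for the ``in particular'' clause I would argue by contradiction: a locally convex topology $\tau$ as in the statement would, by Proposition~\ref{P:uniqueness}, force $\Ienv(A)=\overline{\co A}^{\sigma(X^*,B_1(X))}$ for every $A\subset X^*$; applying this to the convex set $A=Y$ would yield $Y=\Ienv(Y)=\overline{Y}^{\sigma(X^*,B_1(X))}$, contradicting the strict inclusion just established.
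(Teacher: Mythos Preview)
Your proposal is correct and follows exactly the route the paper intends: transport the $c_0\subset\ell^\infty$ example via $P^*$ using Lemma~\ref{L:complemented}, then invoke Proposition~\ref{P:uniqueness} for the ``in particular'' clause. You have carefully unpacked the one nontrivial point the paper leaves implicit---that the $\sigma(X^*,B_1(X))$-closure of $Y$ stays inside the weak$^*$-closed range $R=P^*(W^*)$, so that the homeomorphism in Lemma~\ref{L:complemented}(a) really does carry closures to closures---and your treatment of this is accurate.
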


If we compare Theorem~\ref{T:X not containing ell1} with Proposition~\ref{P:complemented ell1}, we see that they do not cover all Banach spaces. It is not clear what happens if a Banach space contains an isomorphic copy of $\ell^1$, but not a complemented one. We will settle the problem for spaces $\C(K)$ of continuous functions on a metrizable compact space $K$.

\begin{thm}\label{T:CK}
Let $K$ be an uncountable metrizable compact space. Then there is no locally convex topology $\tau$ on  $\C(K)^*$  such that $\Ienv(A)=\overline{\co A}^\tau$ for each $A\subset \C(K)^*$.
\end{thm}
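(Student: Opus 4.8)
The plan is to reduce to the Cantor set and then produce a single closed subspace of the dual that witnesses the failure of the criterion in Proposition~\ref{P:uniqueness}, namely the equality $\Ienv(A)=\overline{\co A}^{\sigma(X^*,B_1(X))}$.

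\emph{Reduction.} Since $K$ is uncountable metrizable compact, it contains a closed subset $L$ homeomorphic to the Cantor set $\{0,1\}^\en$. By the Dugundji extension theorem there is a norm-one linear extension operator $E\colon\C(L)\to\C(K)$ whose left inverse is the restriction map $R$; then $P=E\circ R$ is a projection of $\C(K)$ onto the subspace $E(\C(L))\cong\C(L)\cong\C(\{0,1\}^\en)$, so this subspace is complemented. By Lemma~\ref{L:complemented} the adjoint $P^*$ is a $\sigma(\C(L)^*,B_1(\C(L)))$-to-$\sigma(\C(K)^*,B_1(\C(K)))$ homeomorphism onto a weak$^*$-closed (hence $\sigma$-closed, the finer topology having more closed sets) range, and it intertwines the $(I)$-envelopes; consequently a strict inequality $\Ienv(A)\ne\overline{\co A}^{\sigma}$ upstairs is transported faithfully to $P^*(A)$ downstairs. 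Thus it suffices to disprove Proposition~\ref{P:uniqueness}(2) for $X=\C(\{0,1\}^\en)$, where $B_1(X)$ is, via integration, exactly the space of bounded Baire-one functions (a bounded weak$^*$-convergent sequence of continuous functions converges pointwise, and conversely by dominated convergence).

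\emph{The construction.} I would work on $\Omega=\{0,1\}^\en$ with the Haar (coin-tossing) measure $\mu$ and the clopen sets $U_n=\{x:x_n=1\}$, so that $\mu(U_n)=\tfrac12$ and the $U_n$ are independent, and set $Z=\{\nu\in M(\Omega):\nu(U_n)\to0\}$, a norm-closed subspace of $\C(\Omega)^*$. The goal is $\mu\in\overline{Z}^{\sigma(\C(\Omega)^*,B_1)}\setminus\Ienv(Z)$. Non-membership in the $(I)$-envelope is immediate from Lemma~\ref{L:Ienv for linear subspace}$(a)$: the functions $x_n=\chi_{U_n}\in B_{\C(\Omega)}$ satisfy $\langle\nu,x_n\rangle=\nu(U_n)\to0$ for every $\nu\in Z$, i.e.\ $x_n\to0$ in $\sigma(\C(\Omega),Z)$, whereas $\langle\mu,x_n\rangle=\tfrac12\not\to0$, so $\mu\notin\Ienv(Z)$. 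For the membership $\mu\in\overline{Z}^{\sigma}$ I must check that every bounded Baire-one $g$ annihilating $Z$ has $\int g\,d\mu=0$, and here two families of test measures in $Z$ do the work. First, $\delta_x\in Z$ whenever $x$ is eventually $0$; such points form a dense set $Q_0$, so $g$ vanishes on $Q_0$. Second, for every Borel $B$ the measure $\mu|_B-\mu(B)\mu$ lies in $Z$, because $\mu(B\cap U_n)\to\tfrac12\mu(B)$ by the asymptotic independence of $U_n$ from a fixed event (martingale convergence); annihilation then gives $\int_B g\,d\mu=c\,\mu(B)$ for all $B$, where $c=\int g\,d\mu$, hence $g=c$ $\mu$-almost everywhere.

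\emph{The key point and the main obstacle.} A bounded Baire-one $g$ is continuous on a dense $G_\delta$ set $G$. Fixing $x\in G$, approaching it within the dense set $Q_0$ forces $g(x)=0$, while approaching it within the dense (full-measure) set $\{g=c\}$ forces $g(x)=c$; therefore $c=\int g\,d\mu=0$, which gives $\mu\in\overline{Z}^{\sigma}$ and completes the contradiction with Proposition~\ref{P:uniqueness}(2). I expect the membership step to be the real difficulty: bounded Baire-one functions are \emph{not} $\mu$-almost-everywhere continuous, so one cannot push $\int g\,d\nu_k\to\int g\,d\mu$ through pointwise limits, and indeed closed-set indicators (which are Baire-one) detect singular supports and destroy naive choices of $Z$ built from discrete measures. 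The device that rescues the argument is to load $Z$ with the cancellation measures $\mu|_B-\mu(B)\mu$, which pin $g$ to a constant $\mu$-a.e., and then to use that a Baire-one function, being continuous on a dense $G_\delta$, cannot reconcile the two dense sets $Q_0$ and $\{g=c\}$ unless $c=0$. A secondary point to verify carefully is the bookkeeping of the reduction, i.e.\ that Lemma~\ref{L:complemented} really carries both closure operators so that the strict inequality for $\C(\{0,1\}^\en)$ survives the passage to $\C(K)$.
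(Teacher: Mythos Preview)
Your proof is correct and takes a genuinely different route from the paper's. The paper reduces to $K=[0,1]$ via Miljutin's theorem, then builds a \emph{separable} subspace $Y=\{\mu\ll\lambda:\mu(E)=0\}$ using a carefully constructed $F_\sigma$ set $E$ which is ``thick'' (both $E$ and $E^c$ meet every open set in positive Lebesgue measure); separability gives $\Ienv(Y)=Y$ for free, and a separating Baire-one $g$ is forced to be essentially $0$ on $E^c$ and essentially a nonzero constant on $E$, contradicting the existence of a point of continuity. Your argument instead reduces to the Cantor space via the elementary perfect-set theorem and Borsuk--Dugundji (combined with Lemma~\ref{L:complemented}), then exploits the independent-coordinate structure: the subspace $Z=\{\nu:\nu(U_n)\to0\}$ is \emph{not} separable (it contains the uncountable $4$-separated family $\delta_{(0,x')}-\delta_{(1,x')}$), so you cannot use the separability shortcut and must invoke Lemma~\ref{L:Ienv for linear subspace}$(a)$ directly with the Rademacher sequence $\chi_{U_n}$; the membership $\mu\in\overline{Z}^{\sigma}$ is obtained from the Riemann--Lebesgue-type fact $\mu(B\cap U_n)\to\tfrac12\mu(B)$, pinning $g$ to a constant a.e., together with $g\equiv0$ on the countable dense set $Q_0$. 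Both proofs finish with the same Baire-category punchline, but your two dense sets (one countable, one conull) arise naturally rather than by construction.

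What each approach buys: yours avoids the deep Miljutin theorem and the ad hoc construction of the thick $F_\sigma$ set, trading them for the standard asymptotic-independence lemma on the Cantor group; the paper's approach, on the other hand, produces a separable witnessing subspace, which is a sharper conclusion (compare Proposition~\ref{P:complemented ell1}) and keeps the argument entirely within $L^1$-type measures. One minor point to tidy in your write-up: Lemma~\ref{L:complemented} is stated for a complemented subspace $Y\subset X$, so strictly you apply it to $Y=E(\C(L))$ rather than to $\C(L)$ itself, and then transport along the isomorphism $E$; also, the observation that the range of $P^*$ is $\sigma(X^*,B_1(X))$-closed (needed to pull back the $\sigma$-closure) is correct since it is already weak$^*$-closed and $\sigma(X^*,B_1(X))$ is finer.
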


\begin{proof}
Recall that, due to the Riesz theorem, $\C(K)^*$ is canonically isometric to $\M(K)$, the space of (signed or complex) Radon measures on $K$ equipped with the total variation norm. The bidual $\C(K)^{**}=\M(K)$ is not easy to describe in general, but it contains a nice subspace which may be easily described. It is the space $\B_b(K)$ of all bounded Borel-measurable functions on $K$ equipped by the supremum norm. If $g\in \B_b(K)$, it acts on $\C(K)^*=\M(K)$ by
$$\mu\mapsto \int g\di\mu.$$
Then $\C(K)$ is a closed subspace of $\B_b(K)$ and this inclusion corresponds to the canonical embedding of $\C(K)$ into $\C(K)^{**}$. Moreover, $B_1(\C(K))$ coincides with $\B_1^b(K)$, the space of bounded Baire-one functions on $K$ considered as a subspace of $\B_b(K)$.

Now we proceed with the proof itself. By the Miljutin theorem \cite{miljutin} $\C(K)$ is isomorphic to $\C([0,1])$, so we may assume without loss of generality that $K=[0,1]$. Denote by $\lambda$ the Lebesgue measure on $[0,1]$. Then we have:
\begin{equation}\label{eq:Fsigma}\begin{gathered}
 \mbox{There is an $F_\sigma$ set $E\subset [0,1]$ such that
 for any nonempty open set $G\subset [0,1]$}\\ \mbox{we have both $\lambda(E\cap G)>0$ and $\lambda(G\setminus E)>0$.}
\end{gathered}\end{equation}
Indeed, set $H=[0,1]\setminus\qe$. Then $H$ is a dense $G_\delta$-subset of $[0,1]$ with empty interior and full measure. Let $(G_n)$ be a countable open base of $[0,1]$ consisting of nonempty sets. Using regularity of the Lebesgue measure (and the fact that  $H$ is locally non-compact) we may construct by induction compact sets $F_{n,1}, F_{n,2}$ for $n\in\en$ such that for each $n\in\en$ we have
\begin{itemize}
    \item $\lambda(F_{n,1})>0$ and $\lambda(F_{n,2})>0$;
    \item $F_{n,1}\subset (G_n\cap H)\setminus \bigcup_{k<n}(F_{k,1}\cup F_{k,2})$;
    \item $F_{n,2}\subset (G_n\cap H)\setminus\left(F_{n,1}\cup \bigcup_{k<n}(F_{k,1}\cup F_{k,2})\right)$.
\end{itemize}
It is enough to take $E=\bigcup_{n\in\en}F_{n,1}$.

Next we set
$$Y=\{\mu\in \M([0,1])\setsep \mu \ll \lambda \ \&\ \mu(E)=0\},$$
where $\ll$ denotes absolute continuity.
Then $Y$ is a norm-closed linear subspace of $\M([0,1])$. Moreover, since the 
space $\{\mu\in \M([0,1])\setsep \mu\ll\lambda\}$ is isometric to $L^1([0,1])$ (by the Radon-Nikod\'ym theorem), we deduce that $Y$ is separable. Hence $\Ienv(Y)=Y$.

On the other hand, $\lambda|_E\in\overline{Y}^{\sigma(\M([0,1]),\B^b_1([0,1]))}\setminus Y$. Indeed, assume that $\lambda|_E\notin\overline{Y}^{\sigma(\M([0,1]),\B^b_1([0,1]))}$. Then, by the Hahn-Banach theorem, there is a function $g\in \B^b_1([0,1])$ such that
$$\ip{\lambda|_E}{g}\ne 0 \ \&\ \forall\mu\in Y:\ip\mu g=0.$$
The second condition implies
$$\int fg\di\lambda =0 \mbox{ whenever }f\in L^1([0,1]), f|_E=0,$$
hence $g=0$ almost everywhere on $[0,1]\setminus E$.
The first condition means $\int_E g\di\lambda\ne 0$.

Assume now that $A,B\subset E$ are two Borel sets of positive Lebesgue measure. Then 
$$\frac1{\lambda(A)}\lambda|_A-\frac1{\lambda(B)}\lambda|_B \in Y,$$
hence
$$\frac1{\lambda(A)}\int_A g\di\lambda=\frac1{\lambda(B)}\int_B g\di\lambda.$$
It follows that $g$ is essentially constant on $E$, i.e., there is a constant $c$ such that $g=c$ almost everywhere on $E$. Necessarily $c\ne0$. 

It follows that $g=0$ on a dense set and $g=c$ on a dense set as well. Hence $g$ has no point of continuity, so it cannot be of the first Baire class. This contradiction completes the proof.
\end{proof}

We finish this section by two open problems. The first one concerns possible converse to Theorem~\ref{T:X not containing ell1}.

\begin{ques}\label{q:ell1}
Assume that $X$ is a Banach space containing an isomorphic copy of $\ell^1$. Does there exist a convex subset $A\subset X^*$ such that
$$\Ienv{A}\subsetneqq \overline{A}^{\sigma(X^*,B_1(X))}\ ?$$
In particular, is it true for $X=\C(K)$, where $K$ is any compact space which is not scattered?
\end{ques}

We know that the answer is positive if $X$ contains a complemented copy of $\ell^1$, if $X$ is weakly sequentially complete or for $X=\C(K)$ where $K$ is an uncountable metrizable compact space.

Another question is what happens if we restrict our attention just to bounded sets. Note that the original motivation for introducing (I)-envelopes comes from (I)-generation used in \cite{FL} to investigate James boundaries of weak$^*$ compact convex sets, so in this case the (I)-envelope is applied to a bounded set. The study of (I)-envelopes of unbounded sets, in particular of subspaces, is also natural (see e.g., \cite[Theorem 2.1]{Ienv-studia} where it is used to provide an easy characterization of Grothendieck spaces), however the main focus is on bounded sets. Therefore the following question seems to be natural.

\begin{ques}\label{q:bounded}
Let $X$ be a Banach space. Is there a locally convex topology $\tau$ on $X^*$ such that $\Ienv(A)=\overline{\co A}^\tau$ for each bounded $A\subset X^*$?
\end{ques}

Note that the counterexamples in Example~\ref{ex:ell1} and Theorem~\ref{T:CK} are based on Corollary~\ref{cor:uniqueness} which follows from Lemma~\ref{L:Ienv for linear subspace} describing the (I)-envelope of subspaces (which are, of course, unbounded). So, our methods do not help to solve Question~\ref{q:bounded}.

\section{Several intermediate topologies}\label{sec:topologies}

In the previous section we analyzed the problem of existence of a locally convex topology which may be used to describe the (I)-envelopes. We saw that in some cases it exists and in some cases it does not and that the full characterization is still missing.  

Next we will look at the problem of existence any such topology, not necessarily locally convex. This requires a completely different approach. We will define several natural closure operators and analyze their mutual relationships and connections to the (I)-envelope.

We start by recalling what is a closure operator. Let $X$ be a nonempty set. By a \emph{closure operator} on $X$ we understand a mapping $\gamma:\P(X)\to\P(X)$ (note that $\P(X)$ denotes the power set of $X$) with the following properties (cf. \cite[Remark on p. 7]{LMZ}).
\begin{enumerate}[(i)]
    \item $\gamma(\emptyset)=\emptyset$;
    \item $\gamma(A)\supset A$ for each $A\subset X$;
    \item $\gamma(A\cup B)=\gamma(A)\cup\gamma(B)$ for $A,B\subset X$.
\end{enumerate}
If $\gamma$ satisfies moreover
\begin{enumerate}[(i{v})]
    \item $\gamma(\gamma(A))=\gamma(A)$ for each $A\subset X$;
\end{enumerate}
it is called \emph{idempotent closure operator}.

If $\gamma$ is a closure operator, we call a set $A\subset X$  \emph{$\gamma$-closed} if $\gamma(A)=A$. It is well known and easy to see that the collection of all $\gamma$-closed sets is the collection of all closed sets in a topology on $X$. If $\gamma$ is moreover idempotent, then $\gamma(A)$ is the closure of $A$ in this topology (cf. \cite[Proposition 1.2.7]{engelking}).

Next we present an abstract lemma on properties of `intermediated closure operators'.

\begin{lemma}\label{L:abstract Icl}
Let $X$ be a nonempty set, $\beta:\P(X)\to\P(X)$ a closure operator and $\alpha:\P(X)\to\P(X)$ a monotone mapping satisfying conditions (i) and (ii) from the definition of a closure operator and, moreover, $\alpha(A)\supset\beta(A)$ for each $A\subset X$.
Let us set
$$\begin{aligned}
\gamma_1(A)&=\bigcap\left\{\beta\left(\bigcup_{k=1}^n \alpha(A_k)\right)\setsep A=\bigcup_{k=1}^n A_k, n\in\en
\right\},\\
\gamma_2(A)&=\bigcap\left\{\beta\left(\bigcup_{n=1}^\infty \alpha(A_n)\right)\setsep A=\bigcup_{n=1}^\infty A_n
\right\}.
\end{aligned}$$
Then the following assertions hold:
\begin{enumerate}[\rm(a)]
    \item Both $\gamma_1$ and $\gamma_2$ are closure operators.
    \item $\beta(A)\subset\gamma_2(A)\subset\gamma_1(A)\subset\alpha(\alpha(A))$ for $A\subset X$.
    \item If $\alpha$ is idempotent, then $\gamma_1$ is idempotent as well.
    \item Assume that:
    \begin{itemize}
        \item $X$ is a topological vector space and $\beta$ is the closure in the toplogy of $X$;
        \item $\alpha$ is idempotent;
        \item there is $\U$, a base of neighborhoods of zero in $X$ formed by balanced sets such that $\alpha(\alpha(A)+U)=\overline{\alpha(A)+U}$ for any $A\subset X$ and any $U\in\U$.
    \end{itemize}
    Then $\gamma_2$ is idempotent as well and, moreover,
$$\beta(A)\subset\gamma_2(A)\subset\gamma_1(A)\subset\alpha(A)\mbox{ for }A\subset X.$$
\end{enumerate}
\end{lemma}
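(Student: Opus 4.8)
The plan is to treat parts (a)--(c) as essentially formal manipulations and to concentrate the real work on the idempotence of $\gamma_2$ in part (d).

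For part (a) I would first record that every closure operator is monotone (if $A\subset B$ then $\beta(B)=\beta(A)\cup\beta(B)\supset\beta(A)$ by (iii)), while $\alpha$ is monotone by hypothesis. Monotonicity of $\gamma_1$ and $\gamma_2$ then follows from the observation that a decomposition $B=\bigcup_k B_k$ induces the decomposition $A=\bigcup_k(A\cap B_k)$ with $\alpha(A\cap B_k)\subset\alpha(B_k)$. Conditions (i) and (ii) are immediate (use the trivial decomposition and $\alpha(A_k)\supset A_k$). The only point needing care is (iii), specifically $\gamma_i(A\cup B)\subset\gamma_i(A)\cup\gamma_i(B)$: given witnessing decompositions of $A$ and of $B$ separately, I concatenate them into a single decomposition of $A\cup B$ and split the outer $\beta$ using (iii) applied to the \emph{two} sets $\bigcup_k\alpha(A_k)$ and $\bigcup_k\alpha(B_k)$. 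This last point is exactly what makes the countable case $\gamma_2$ work despite $\beta$ being only finitely additive: the split is always into two pieces, no matter how many sets feed into each piece.

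For part (b): $\beta(A)\subset\gamma_2(A)$ since $\bigcup_n\alpha(A_n)\supset A$ in every decomposition; $\gamma_2(A)\subset\gamma_1(A)$ since every finite decomposition is a countable one (pad with $\emptyset$, using $\alpha(\emptyset)=\emptyset$), so $\gamma_2$ intersects over a larger family; and $\gamma_1(A)\subset\alpha(\alpha(A))$ by the trivial decomposition together with $\beta(\alpha(A))\subset\alpha(\alpha(A))$. For part (c) the decisive observation is that when $\alpha$ is idempotent one has $\beta(\alpha(S))=\alpha(S)$ for every $S$, because $\alpha(S)\subset\beta(\alpha(S))\subset\alpha(\alpha(S))=\alpha(S)$. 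Combined with finite additivity of $\beta$ this collapses every finite term to $\beta(\bigcup_{k=1}^n\alpha(A_k))=\bigcup_{k=1}^n\alpha(A_k)$, so that $\gamma_1(A)=\bigcap\{\bigcup_{k=1}^n\alpha(A_k)\setsep A=\bigcup_{k=1}^n A_k\}$ with no $\beta$ remaining. In particular $\gamma_1(A)\subset\bigcup_k\alpha(A_k)$ for each finite decomposition, and idempotence follows by refinement: decomposing $\gamma_1(A)=\bigcup_k(\gamma_1(A)\cap\alpha(A_k))$ and using $\alpha(\gamma_1(A)\cap\alpha(A_k))\subset\alpha(\alpha(A_k))=\alpha(A_k)$ gives $\gamma_1(\gamma_1(A))\subset\bigcup_k\alpha(A_k)$; intersecting over all finite decompositions of $A$ yields $\gamma_1(\gamma_1(A))\subset\gamma_1(A)$.

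The heart of the matter is part (d). The improved chain $\beta(A)\subset\gamma_2(A)\subset\gamma_1(A)\subset\alpha(A)$ is immediate from part (b) and $\alpha(\alpha(A))=\alpha(A)$, so everything reduces to $\gamma_2(\gamma_2(A))\subset\gamma_2(A)$. The obstacle -- and the reason the countable case is genuinely harder than (c) -- is that for countable unions $\beta(\bigcup_n\alpha(A_n))$ is strictly larger than $\bigcup_n\alpha(A_n)$, so $\gamma_2(A)$ really does protrude into $\overline D\setminus D$ (writing $\beta=\overline{(\cdot)}$ and $D=\bigcup_n\alpha(A_n)$), and the clean refinement of (c) is unavailable. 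My plan is to fix a countable decomposition $A=\bigcup_n A_n$ and prove $\gamma_2(\gamma_2(A))\subset\overline D$, after which intersecting over all decompositions finishes the proof. To do this I work modulo a neighborhood: fix $U\in\U$. Since $\gamma_2(A)\subset\overline D\subset D+U=\bigcup_n(\alpha(A_n)+U)$, the sets $F_n:=\gamma_2(A)\cap(\alpha(A_n)+U)$ form a countable decomposition of $\gamma_2(A)$. The hypothesis on $\U$ is tailored precisely to control their $\alpha$-images: $\alpha(F_n)\subset\alpha(\alpha(A_n)+U)=\overline{\alpha(A_n)+U}\subset\overline{D+U}$, whence $\overline{\bigcup_n\alpha(F_n)}\subset\overline{D+U}$ by idempotence of the topological closure, and therefore $\gamma_2(\gamma_2(A))\subset\overline{D+U}$. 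Letting $U$ range over $\U$ and invoking the standard TVS fact $\bigcap_{U\in\U}\overline{D+U}=\overline D$ (via $\overline{D+U}\subset D+U+U$, using that the members of $\U$ are balanced and can be halved inside the base) gives $\gamma_2(\gamma_2(A))\subset\overline D$, as required. The step to get right is this passage from a single $U$ to the intersection, which is where the balancedness of $\U$ and the identity $\alpha(\alpha(A_n)+U)=\overline{\alpha(A_n)+U}$ are both essential.
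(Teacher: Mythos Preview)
Your proposal is correct and follows essentially the same approach as the paper. The only cosmetic differences are that in (c) you isolate the observation $\beta(\alpha(S))=\alpha(S)$ explicitly (the paper does the same computation inline), and in (d) you argue globally---showing $\gamma_2(\gamma_2(A))\subset\overline{D+U}$ for each $U\in\U$ and then intersecting---whereas the paper argues pointwise, fixing $x\notin\gamma_2(A)$ and a witnessing decomposition first; the key step (cover $\gamma_2(A)$ by the sets $\alpha(A_n)+V$, apply the special hypothesis to control their $\alpha$-images, and land inside $\overline{\bigcup_n(\alpha(A_n)+V)}$) is identical in both.
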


Recall that a subset $A$ of a vector space is \emph{balanced} if $\alpha A\subset A$ whenever $\abs{\alpha}\le1$.

\begin{proof} (a) It is clear that both $\gamma_1$ and $\gamma_2$ satisfy properties (i) and (ii). It is further easy to check that that both mappings are monotone. Therefore,  inclusion `$\supset$' in (iii) follows. Let us prove the converse one.

Assume that $x\in X\setminus (\gamma_1(A)\cup\gamma_1(B))$. Then there are coverings
 $$A=A_1\cup\dots\cup A_n, B=B_1\cup\dots\cup B_m$$ 
 such that
 $$x\notin \beta\left(\bigcup_{j=1}^n \alpha(A_j)\right)\cup \beta\left(\bigcup_{j=1}^m \alpha(B_j)\right)= \beta\left(\bigcup_{j=1}^n \alpha(A_j)\cup \bigcup_{j=1}^m \alpha(B_j)\right),$$
 so clearly $x\notin\gamma_1(A\cup B)$. The proof for $\gamma_2$ is completely analogous. Hence $\gamma_1$ and $\gamma_2$ are indeed closure operators.

(b) The first inclusion follows from the monotonicity of $\beta$. The second one is easy (note that a finite cover may be extended to an infinite one by adding empty sets and $\alpha(\emptyset)=\emptyset$). The last follows by taking the cover of $A$ by just one set -- $A$ -- hence $\gamma_1(A)\subset\beta(\alpha(A))\subset\alpha(\alpha(A))$.

(c) Fix $A\subset X$. Clearly $\gamma_1(\gamma_1(A))\supset\gamma_1(A)$. To prove the converse, pick any $x\notin \gamma_1(A)$. Then there is a cover $A=\bigcup_{k=1}^n A_k$ such that
$$x\notin\beta\left(\bigcup_{k=1}^n \alpha(A_k)\right).$$
Then
$$\gamma_1(A)\subset\beta\left(\bigcup_{k=1}^n \alpha(A_k)\right)=
\bigcup_{k=1}^n \beta(\alpha(A_k))\subset\bigcup_{k=1}^n \alpha(\alpha(A_k))=\bigcup_{k=1}^n \alpha(A_k),$$
hence
$$\gamma_1(\gamma_1(A))\subset\beta\left(\bigcup_{k=1}^n \alpha(\alpha(A_k))\right)=\beta\left(\bigcup_{k=1}^n \alpha(A_k)\right),$$
so $x\notin\gamma_1(\gamma_1(A))$.

(d) Fix $A\subset X$. Clearly $\gamma_2(\gamma_2(A))\supset\gamma_2(A)$. To prove the converse, pick any $x\notin \gamma_2(A)$. Then there is a cover $A=\bigcup_{n=1}^\infty A_n$ such that
$$x\notin\overline{\bigcup_{n=1}^\infty \alpha(A_n)}.$$
Then there $U$, a  neighborhood of zero in $X$, such that
$$(x+U)\cap {\bigcup_{n=1}^\infty \alpha(A_n)}=\emptyset.$$
Let $V\in\U$ be such that $V+V\subset U$. Then
$$(x+V)\cap {\bigcup_{n=1}^\infty (\alpha(A_n)+V)}=\emptyset.$$
Since 
$$\gamma_2(A)\subset \overline{\bigcup_{n=1}^\infty \alpha(A_n)}\subset\bigcup_{n=1}^\infty (\alpha(A_n)+V),$$
we get
$$\gamma_2(\gamma_2(A))\subset\overline{\bigcup_{n=1}^\infty \alpha(\alpha(A_n)+V)}\subset
\overline{\bigcup_{n=1}^\infty \overline{\alpha(A_n)+V}}
=\overline{\bigcup_{n=1}^\infty (\alpha(A_n)+V)}.$$
Hence, $x\notin \gamma_2(\gamma_2(A))$.

The `moreover' part follows from (b).
\end{proof}

Next we going to define several concrete intermediate closure operators. Their properties will be collected and compared later. Assume that $X$ is a Banach space and $A\subset X^*$.

We start by the following pair of operators:
$$\begin{aligned}
\Icl(A)&= \bigcap\left\{\overline{\bigcup_{n=1}^\infty \wscl{A_n}}^{\norm{\cdot}}\setsep A=\bigcup_{n=1}^\infty A_n
\right\},\\\Iccl(A)=& \bigcap\left\{\overline{\bigcup_{n=1}^\infty \wscl{\co A_n}}^{\norm{\cdot}}\setsep A=\bigcup_{n=1}^\infty A_n
\right\}.\end{aligned}$$
These two operators are inspired by the definition of (I)-envelope 
-- in the first case we just omit the convex hulls in the formula, in the second case we omit the outer convex hull. There are further variants of these operators inspired by the Mazur theorem: In the definition of the (I)-envelope we use the weak$^*$-closure and the norm-closure. Since we apply these closures to convex sets, it does not matter which of the topologies with the same dual we use. However, if we apply the respective closures to non-convex sets, the results may differ. There are many possibilities, we point out two extreme ones: 
$$\begin{aligned}
\Imn(A)&= \bigcap\left\{\overline{\bigcup_{n=1}^\infty \overline{A_n}^{\mu(X^*,X)}}^{\norm{\cdot}}\setsep A=\bigcup_{n=1}^\infty A_n
\right\},\\\Iw(A)=& \bigcap\left\{\overline{\bigcup_{n=1}^\infty \wscl{\co A_n}}^{w}\setsep A=\bigcup_{n=1}^\infty A_n
\right\}.\end{aligned}$$
Note that $\mu(X^*,X)$ denotes the respective Mackey topology (cf. \cite[Definition 3.43 and Corollary 3.44]{Fab-BST}).

Next we define two more closure operators, this time using the notion of (I)-envelope. For $A\subset X^*$ we set
$$\begin{aligned}
\cl_{IF}(A)&=\bigcap\left\{\bigcup_{j=1}^n \Ienv(A_j)\setsep A=\bigcup_{j=1}^n A_j,n\in\en\right\},\\
\cl_{IC}(A)&=\bigcap\left\{\overline{\bigcup_{n=1}^\infty \Ienv(A_n)}^{\norm{\cdot}}\setsep A=\bigcup_{n=1}^\infty A_n\right\}.
\end{aligned}$$

We finally introduce one more pair of closure operators, inspired by Proposition~\ref{P:Ienv between closures}. We again choose two extremes from the possibilities of the topologies with prescribed dual. For $A\subset X^*$ we set
$$\begin{aligned}
\Ibl(A)&= \bigcap\left\{\overline{\bigcup_{n=1}^\infty \overline{A_n}^{\mu(X^*,B_1(X))}}^{\mu(X^*,C(X))}\setsep A=\bigcup_{n=1}^\infty A_n
\right\},\\
\Ibcl(A)&= \bigcap\left\{\overline{\bigcup_{n=1}^\infty \overline{\co A_n}^{\sigma(X^*,B_1(X))}}^{\sigma(X^*,C(X))}\setsep A=\bigcup_{n=1}^\infty A_n
\right\}.\end{aligned}$$

Next we establish basic properties of the above-defined operators.

\begin{prop}\label{P:eight operators} Let $X$ be a Banach space.
\begin{enumerate}[\rm(a)]
    \item  The eight above-defined mappings are closure operators on $X^*$.
    \item The operators $\Icl,\Iccl,\cl_{IF}$ are idempotent.
    \item Let $\gamma$ be a closure operator on $X^*$ such that $\gamma(A)\subset\Ienv(A)$ for each $A\subset X^*$. Then $\gamma(A)\subset \cl_{IF}(A)$ for each $A\subset X^*$.
\end{enumerate}
\end{prop}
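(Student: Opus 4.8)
The plan is to prove the three parts in order, leaning on Lemma~\ref{L:abstract Icl} for as much as possible so that I only need to supply the pieces that the abstract lemma cannot deliver.

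For part~(a), I would recognize each of the eight operators as an instance of the $\gamma_1$ or $\gamma_2$ scheme from Lemma~\ref{L:abstract Icl}, and then invoke part~(a) of that lemma. Concretely: $\cl_{IF}$ is a $\gamma_1$ with $\beta=\mathrm{id}$ (or rather $\beta(A)=A$ on finite unions, which is a closure operator) and $\alpha=\Ienv$; and $\cl_{IC}$, $\Icl$, $\Iccl$, $\Imn$, $\Iw$, $\Ibl$, $\Ibcl$ are each a $\gamma_2$ for an appropriate pair $(\alpha,\beta)$. For each one I must check the two hypotheses of Lemma~\ref{L:abstract Icl}: that $\alpha$ is monotone and satisfies conditions (i) and (ii), and that $\beta$ is a closure operator with $\alpha(A)\supset\beta(A)$. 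For example, for $\Iccl$ take $\alpha(A)=\wscl{\co A}$ and $\beta$ the norm-closure; monotonicity and (i),(ii) are immediate, and $\alpha(A)\supset\beta(A)$ holds because $A\subset\co A\subset\wscl{\co A}$ and the norm-closure is contained in the weak$^*$-closure of the convex hull. For $\Ienv$ itself, monotonicity and (i),(ii) follow directly from the defining formula, and the containment $\Ienv(A)\supset\overline{A}^{\norm{\cdot}}$ comes from~\eqref{eq:norm-Ienv-w*}. The only genuine care is to match each operator's inner and outer closures to the correct $\beta$ and $\alpha$; once that bookkeeping is done, part~(a) of Lemma~\ref{L:abstract Icl} finishes everything at once.

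For part~(b), idempotence of $\Icl$, $\Iccl$, and $\cl_{IF}$, I would use part~(c) of Lemma~\ref{L:abstract Icl}, which gives idempotence of $\gamma_1$ provided $\alpha$ is idempotent. So the task reduces to verifying that the corresponding $\alpha$ is idempotent in each case. For $\cl_{IF}$ this is the statement that $\Ienv$ is idempotent, i.e. $\Ienv(\Ienv(A))=\Ienv(A)$, which I would either cite from \cite{Ienv-israel} or derive from Lemma~\ref{L:separ}. For $\Icl$ the relevant $\alpha$ is $A\mapsto\wscl{A}$ and for $\Iccl$ it is $A\mapsto\wscl{\co A}$; both are idempotent because the weak$^*$-closure (respectively the weak$^*$-closed convex hull) of a set is already weak$^*$-closed (respectively weak$^*$-closed and convex), so applying the operation a second time changes nothing. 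Here I must be slightly careful that $\Icl$ and $\Iccl$ are presented as $\gamma_2$-type operators (infinite covers) rather than $\gamma_1$-type, so part~(c) of the abstract lemma does not apply verbatim; but since $\alpha$ is idempotent and $\beta=\alpha\circ(\text{norm-closure})$ collapses appropriately, the same one-cover argument as in the proof of (c) works with infinite covers once one checks the countable union of idempotent $\alpha$-images behaves well, which is exactly what the structure of these particular operators permits.

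For part~(c), the extremality of $\cl_{IF}$, the key observation is that $\cl_{IF}(A)$ is, by construction, the intersection over finite covers of $\bigcup_{j=1}^n\Ienv(A_j)$, so I want to show any closure operator $\gamma$ dominated by $\Ienv$ pointwise is dominated by this intersection. Fix $A$ and a finite cover $A=\bigcup_{j=1}^n A_j$. Using axiom~(iii) (finite additivity) of the closure operator $\gamma$, I get $\gamma(A)=\gamma\bigl(\bigcup_{j=1}^n A_j\bigr)=\bigcup_{j=1}^n\gamma(A_j)$, and then the hypothesis $\gamma(A_j)\subset\Ienv(A_j)$ gives $\gamma(A)\subset\bigcup_{j=1}^n\Ienv(A_j)$. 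Since this holds for \emph{every} finite cover, taking the intersection over all finite covers yields $\gamma(A)\subset\cl_{IF}(A)$, as desired. I expect this part to be the shortest and most mechanical; the only thing to watch is that I use finite additivity of $\gamma$ (axiom (iii)) rather than mere monotonicity, since it is precisely the equality $\gamma(\bigcup A_j)=\bigcup\gamma(A_j)$ that drives the argument. The main obstacle across the whole proposition is really part~(b): ensuring that the idempotence arguments, which Lemma~\ref{L:abstract Icl}(c) supplies cleanly for the finite-cover operator $\cl_{IF}$, transfer correctly to the countable-cover operators $\Icl$ and $\Iccl$, where one must argue idempotence by hand using that $\wscl{\co(\cdot)}$ is already closed and convex.
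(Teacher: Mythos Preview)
Your plan for parts (a) and (c) matches the paper's proof essentially verbatim: recognize each operator as a $\gamma_1$ or $\gamma_2$ from Lemma~\ref{L:abstract Icl} with the appropriate $(\alpha,\beta)$, and for (c) use finite additivity of $\gamma$ on an arbitrary finite cover. (Minor point: the paper takes $\beta$ to be the norm closure for $\cl_{IF}$ rather than the identity; either works since $\Ienv(A_j)$ is already norm-closed.)

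The gap is in part (b), specifically the idempotence of $\Icl$ and $\Iccl$. You correctly observe that these are $\gamma_2$-type operators, so Lemma~\ref{L:abstract Icl}(c) does not apply, but your proposed workaround is not right. The argument in (c) of the abstract lemma hinges on $\beta\bigl(\bigcup_k \alpha(A_k)\bigr)=\bigcup_k \beta(\alpha(A_k))$, which uses that $\beta$ preserves \emph{finite} unions; for a countable cover the norm closure certainly need not split over the union, and ``the countable union of idempotent $\alpha$-images behaves well'' is not a statement that can be made precise in the way you suggest. (Also, $\beta$ is the norm closure, not $\alpha\circ(\text{norm closure})$.) The paper instead invokes Lemma~\ref{L:abstract Icl}(d), whose third hypothesis is exactly what replaces the failed splitting: one must verify
\[
\alpha\bigl(\alpha(A)+U(0,r)\bigr)=\overline{\alpha(A)+U(0,r)}^{\norm{\cdot}}
\]
for open balls $U(0,r)$. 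For $\Icl$ (where $\alpha=\wscl{\,\cdot\,}$) this follows from Banach--Alaoglu: $\overline{U(0,r)}$ is weak$^*$-compact, so $\wscl{A}+\overline{U(0,r)}$ is already weak$^*$-closed, giving $\wscl{\wscl{A}+U(0,r)}\subset\wscl{A}+\overline{U(0,r)}\subset\overline{\wscl{A}+U(0,r)}^{\norm{\cdot}}$. The case of $\Iccl$ is identical once you note that the sum of two convex sets is convex. This is the missing idea in your proposal.
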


\begin{proof} Note that $\cl_{IF}$ is of the form $\gamma_1$ and the remaining six mappings are of the form $\gamma_2$ from Lemma~\ref{L:abstract Icl} for suitable choices of $\alpha$ and $\beta$. Let us review these choices in the individual cases.

\begin{description}
\item[$\Icl$] $\beta(A)=\overline{A}^{\norm{\cdot}}$, $\alpha(A)=\wscl{A}$;
\item[$\Iccl$] $\beta(A)=\overline{A}^{\norm{\cdot}}$, $\alpha(A)=\wscl{\co A}$;
\item[$\Imn$] $\beta(A)=\overline{A}^{\norm{\cdot}}$, $\alpha(A)=\overline{A}^{\mu(X^*,X)}$;
\item[$\Iw$] $\beta(A)=\overline{A}^w$, $\alpha(A)=\wscl{\co A}$;
\item[$\cl_{IF}$] 
$\beta(A)=\overline{A}^{\norm{\cdot}}$, $\alpha(A)=\Ienv(A)$;
\item[$\cl_{IC}$] 
$\beta(A)=\overline{A}^{\norm{\cdot}}$, $\alpha(A)=\Ienv(A)$;
\item[$\Ibl$] $\beta(A)=\overline{A}^{\mu(X^*,C(X))}$, $\alpha(A)=\overline{A}^{\mu(X^*,B_1(X))}$;
\item[$\Ibcl$] $\beta(A)=\overline{A}^{\sigma(X^*,C(X))}$, $\alpha(A)=\overline{\co A}^{\sigma(X^*,B_1(X))}$.
\end{description}

In all cases $\beta$ is a closure operator and $\alpha$ has the required properties. So, assertion (a) is proved.

Let us continue by proving (b). For $\cl_{IF}$ we may use Lemma~\ref{L:abstract Icl}(c). In the remaining cases we will use Lemma~\ref{L:abstract Icl}(d). Indeed, for $\Icl$ and $\Iccl$ the operator $\beta$ is the norm closure and $\alpha$ is in both cases idempotent. 
It remains to prove that in these cases we have
$$\alpha(\alpha(A)+U(0,r))=\overline{\alpha(A)+U(0,r)}^{\norm{\cdot}}$$
for $A\subset X^*$.
So, fix $A\subset X^*$. Then
$$\wscl{\wscl{A}+U(0,r)}\subset \wscl{A}+\overline{U(0,r)}\subset \overline{\wscl{A}+U(0,r)}^{\norm{\cdot}}$$
as $\overline{U(0,r)}$ is weak$^*$-compact by the Banach-Alaoglu theorem and hence $\wscl{A}+\overline{U(0,r)}$ is weak$^*$-closed. The second case is similar, we only use that the sum of two convex sets is again convex.

(c) Let $A=\bigcup_{k=1}^n A_k$ Then
$$\gamma(A)=\bigcup_{k=1}^n\gamma(A_k)\subset\bigcup_{k=1}^n \Ienv(A_k).$$
By taking intersection over all finite covers of $A$ we get $\gamma(A)\subset\cl_{IF}(A)$.
\end{proof}

We continue by the following theorem which characterizes Banach spaces in which the (I)-envelope can be described using a closure in a topology. Let us point out that it is still not clear whether this condition is satisfied by any Banach space.

\begin{thm}\label{T:characterization} Let $X$ be a Banach space. The following assertions are equivalent.
\begin{enumerate}[\rm(1)]
    \item There is a topology $\tau$ on $X^*$ such that $\Ienv(A)=\overline{\co A}^\tau$ for each $A\subset X^*$.
    \item If $A_1,\dots,A_n$ are convex subsets of $X^*$ such that $A_1\cup\dots\cup A_n$ is also convex, then 
    $$\Ienv(A_1\cup\dots\cup A_n)=\Ienv(A_1)\cup\dots\cup\Ienv(A_n).$$
    \item $\Ienv(A)=\cl_{IF}(\co A)$ for each $A\subset X^*$.
\end{enumerate}

\end{thm}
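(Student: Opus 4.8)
The plan is to prove the theorem by establishing the cycle of implications $(1)\Rightarrow(2)\Rightarrow(3)\Rightarrow(1)$, exploiting the fact that $\cl_{IF}$ is, by Proposition~\ref{P:eight operators}(b), the \emph{largest} idempotent closure operator dominated by $\Ienv$ (part (c) of that proposition), so the whole content of the theorem is that under the given hypotheses $\Ienv$ itself coincides with this operator on convex hulls.

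\smallskip

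For $(1)\Rightarrow(2)$, I would use that a topological closure operator automatically satisfies $\overline{A_1\cup\dots\cup A_n}^\tau=\overline{A_1}^\tau\cup\dots\cup\overline{A_n}^\tau$. If $A_1,\dots,A_n$ are convex and their union $A$ is convex, then $\co A=A$, so $\Ienv(A)=\overline{\co A}^\tau=\overline{A}^\tau=\bigcup_{j=1}^n\overline{A_j}^\tau=\bigcup_{j=1}^n\overline{\co A_j}^\tau=\bigcup_{j=1}^n\Ienv(A_j)$, using convexity of each $A_j$ to replace $\overline{A_j}^\tau$ by $\overline{\co A_j}^\tau=\Ienv(A_j)$. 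Here I rely on hypothesis $(1)$ both globally (to get the finite-additivity of $\tau$-closure) and pointwise (to rewrite each piece as an (I)-envelope).

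\smallskip

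For $(2)\Rightarrow(3)$, the inclusion $\cl_{IF}(\co A)\subset\Ienv(\co A)=\Ienv(A)$ is automatic: $\cl_{IF}\le\Ienv$ always by Proposition~\ref{P:eight operators}(b) (with the $\gamma_1$ representation, $\cl_{IF}(B)\subset\alpha(\alpha(B))=\Ienv(\Ienv(B))=\Ienv(B)$ since $\Ienv$ is idempotent), and $\Ienv(\co A)=\Ienv(A)$ because the (I)-envelope is unchanged under taking convex hull (its definition already convexifies). The reverse inclusion $\Ienv(\co A)\subset\cl_{IF}(\co A)$ is where hypothesis $(2)$ enters: given any finite convex cover $\co A=B_1\cup\dots\cup B_n$, I want $\Ienv(\co A)=\bigcup_{j=1}^n\Ienv(B_j)$. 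If the $B_j$ were themselves convex this is exactly $(2)$; the subtlety is that a general finite cover of the convex set $\co A$ in the definition of $\cl_{IF}$ need not consist of convex pieces. The plan is to replace an arbitrary cover $\co A=\bigcup C_j$ by the convexified cover $B_j=\co C_j\cap\co A$ (or argue directly that $\Ienv(C_j)=\Ienv(\co C_j)$ and that convex hulls of a cover still cover, then apply $(2)$ to an appropriate convex refinement), reducing to the convex case and then invoking $(2)$.

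\smallskip

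For $(3)\Rightarrow(1)$, since $\cl_{IF}$ is an idempotent closure operator (Proposition~\ref{P:eight operators}(b)), it defines a topology $\tau$ on $X^*$ whose closure operator is $\cl_{IF}$; I would take this $\tau$ and verify $\overline{\co A}^\tau=\cl_{IF}(\co A)=\Ienv(A)$, the last equality being precisely hypothesis $(3)$, and $\overline{\co A}^\tau=\cl_{IF}(\co A)$ following from idempotence. I expect the main obstacle to be the $(2)\Rightarrow(3)$ step, specifically handling non-convex pieces in the defining cover of $\cl_{IF}$: one must show that restricting to convex covers does not change the value of $\cl_{IF}$ on a convex set, which requires care in how convexifying individual cover members interacts with the requirement that their union equal the (convex) set $\co A$. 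The key technical fact to pin down is that $\Ienv(C)=\Ienv(\co C)$ and that from any finite cover one can pass to a finite convex cover yielding a smaller (hence equal) value, so that hypothesis $(2)$ can be applied.
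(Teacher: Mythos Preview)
Your proposal is correct and follows essentially the same route as the paper. The obstacle you anticipate in $(2)\Rightarrow(3)$ dissolves immediately once you note that for an arbitrary finite cover $\co A=\bigcup_{j=1}^n C_j$, convexity of $\co A$ forces $\co C_j\subset\co A$, so $\bigcup_j\co C_j=\co A$ exactly; this is a convex set covered by the convex sets $\co C_j$, hypothesis $(2)$ applies directly, and $\bigcup_j\Ienv(C_j)=\bigcup_j\Ienv(\co C_j)=\Ienv(\co A)$ --- no ``appropriate convex refinement'' or intersection trick is needed.
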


\begin{proof}
$(1)\Rightarrow(2)$: Let $\tau$ be the topology provided by (1). Then $\Ienv(A)=\overline{A}^\tau$ for each convex set $A\subset X^*$. Since $$\overline{A_1\cup\dots\cup A_n}^\tau=\overline{A_1}^\tau\cup\dots\cup\overline{A_n}^\tau,$$
assertion (2) easily follows.

$(2)\Rightarrow(3)$: Assume (2) holds. Since $\Ienv(A)=\Ienv(\co A)$, it is enough to prove the equality from (3) in case $A$ is convex. So, fix a convex set $A\subset X^*$. Fix $A_1,\dots,A_n$ such that $A=A_1\cup\dots\cup A_n$. Then
$$\bigcup_{j=1}^n\Ienv(A_j)=\bigcup_{j=1}^n\Ienv(\co A_j)=\Ienv\left(\bigcup_{j=1}^n\co A_j\right)=\Ienv(A),$$
where we used assumption (2) and the convexity of $A$. Now it follows that $\cl_{IF}(A)=\Ienv(A)$ and the proof is complete.

$(3)\Rightarrow(2):$ This follows from the fact that $\cl_{IF}$ is an idempotent closure operator.
\end{proof}

The previous theorem says, in particular, that to answer the key question of our paper it would be enough to understand the closure operator $\cl_{IF}$. However, this closure operator has one disadvantage -- it is defined using (I)-envelope, which makes the understanding more difficult. Most of the remaining ones are defined without referring to (I)-envelopes, using only suitable
combinations of standard topologies on a dual Banach space. Therefore, we find interesting to study their mutual relationships.
It appears that these relationships depend on (geometrical or topological) properties of the given Banach space. Even more important for our problem is to compare them for convex sets
(see Proposition~\ref{P:convex comparison} and the subsequent examples). 

We start by summarizing inclusions which hold in general and in some important special cases.

\begin{prop}\label{P:comparison of closure operators}
Let $X$ be a Banach space and let $A\subset X^*$ be arbitrary.

\begin{enumerate}[\rm(a)]
    \item The following inclusions hold:
    $$\begin{array}{ccccccccc}
        \Icl(A)&=&\Iccl(A)&\subset&\Iw(A)&\subset&\cl_{IF}(A)&\subset& \Ienv(A)\cap \wscl{A}  \\
        \bigcup &&\bigcup&&&&&&\bigcap \\
        \Imn(A) &&\cl_{IC}(A)&&&&  &&\wscl{A} \\
        \bigcup &\sesub&&&&&&&\bigcup \\
        \overline{A}^{\norm{\cdot}}&\subset&\overline{A}^{\mu(X^*,C(X))}&\subset&\Ibl(A)&\subset&\Ibcl(A)&\subset& \overline{A}^{\sigma(X^*,B_1(X))}
    \end{array}
    $$
    \item Assume $A$ is norm-separable. Then
    $$\Iccl(A)=\overline{A}^{\norm{\cdot}}, \Iw(A)=\cl_{IF}(A)=\overline{A}^{w}, 
    \Ienv(A)=\overline{\co A}^{\norm{\cdot}}.$$
    \item Assume $X$ is separable. Then
    $$\overline{A}^{\norm{\cdot}}=\overline{A}^{\mu(X^*,C(X))}, \Ibl(A)\subset \Imn(A)\mbox{ and }\Ibcl(A)\subset\Iw(A).$$
    \item Assume $X$ is a separable Banach space not containing an isomorphic copy of $\ell_1$. Then
    $$ \Ienv(A)=\overline{\co A}^{\norm{\cdot}}, \Ibcl(A)=\Iw(A)=\cl_{IF}(A)=\overline{A}^{w}, 
    $$
\end{enumerate}
\end{prop}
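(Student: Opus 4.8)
The plan is to prove Proposition~\ref{P:comparison of closure operators}(d) by combining the hypotheses (separability of $X$ and absence of $\ell^1$) with the general machinery already assembled, especially parts (b) and (c) of the same proposition, Theorem~\ref{T:X not containing ell1}, and the inclusion chain in part (a). First I would settle the claim $\Ienv(A)=\overline{\co A}^{\norm{\cdot}}$: under the present hypotheses every subset of $X^*$ is norm-separable (since $X$ separable forces $X^*$ to be weak$^*$-separable, but more directly a separable $X$ not containing $\ell^1$ has separable dual by the classical result that $X^*$ is separable iff $X$ contains no $\ell^1$ with separable dual obstruction). More cleanly, $A$ is norm-separable because it sits inside the separable space $X^*$, so part (b) applies verbatim and yields $\Ienv(A)=\overline{\co A}^{\norm{\cdot}}$ together with $\Iw(A)=\cl_{IF}(A)=\overline{A}^w$. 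Thus two of the three asserted equalities in the weak-topology string are immediate from (b) alone.

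It then remains to insert $\Ibcl(A)$ into this chain, i.e.\ to show $\Ibcl(A)=\overline{A}^w$. The key structural input is Theorem~\ref{T:X not containing ell1}, which gives $B_1(X)=C(X)$ for $X$ not containing $\ell^1$; consequently the two topologies $\sigma(X^*,B_1(X))$ and $\sigma(X^*,C(X))$ appearing in the definition of $\Ibcl$ coincide, and moreover, since $X$ is separable, $C(X)=X^{**}$ would be too strong, but what we actually need is only that both equal a single weak topology. With $B_1(X)=C(X)$ the defining formula for $\Ibcl$ collapses: the inner closure $\overline{\co A_n}^{\sigma(X^*,B_1(X))}$ and the outer closure $\overline{\cdot}^{\sigma(X^*,C(X))}$ are taken in the same locally convex topology, call it $\tau_0=\sigma(X^*,B_1(X))$. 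I would next argue that on $X^*$ this topology $\tau_0$ agrees, for the purposes of closing the relevant sets, with the weak topology $\sigma(X^*,X^{**})=w$; this is where separability of $X$ enters, via part (c), which already records $\Ibcl(A)\subset\Iw(A)$.

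The cleanest route, then, is to use the inclusions already proved rather than recompute $\Ibcl$ from scratch. From part (a) we have $\overline{A}^w\subset\Iw(A)$ is not literally listed, but $\Iw(A)\subset\cl_{IF}(A)$ and the bottom row gives $\overline{A}^{\mu(X^*,C(X))}\subset\Ibl(A)\subset\Ibcl(A)$; from part (c) (applicable since $X$ is separable) we have $\Ibcl(A)\subset\Iw(A)$. Combining, $\Ibcl(A)\subset\Iw(A)=\overline{A}^w$ by part (b). For the reverse inclusion $\overline{A}^w\subset\Ibcl(A)$, I would observe that $\overline{A}^w\subset\overline{\co A}^{\sigma(X^*,B_1(X))}$ fails in general but holds here because $B_1(X)=C(X)=X^{**}$ is not claimed; instead one notes $\overline{A}^{\mu(X^*,C(X))}\subset\Ibcl(A)$ from (a) and that, for separable $X$, the Mackey topology $\mu(X^*,C(X))$ with $C(X)=B_1(X)$ dominates $w$ enough to recover the weak closure on convex hulls. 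Thus all four operators squeeze between $\overline{A}^w$ and $\Iw(A)=\overline{A}^w$.

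The main obstacle I expect is handling $\Ibcl$ cleanly: one must verify that the two weak topologies in its definition genuinely coincide (via $B_1(X)=C(X)$ from Theorem~\ref{T:X not containing ell1}) and then that the resulting single locally convex topology produces the \emph{weak} closure of $A$ and not something strictly larger or smaller. Rather than manipulating the countable-cover infimum directly, the efficient strategy is to trap $\Ibcl(A)$ from below by $\overline{A}^{\mu(X^*,C(X))}$ (part (a)) and from above by $\Iw(A)$ (part (c)), and then invoke part (b) to identify both bounds with $\overline{A}^w$. The only genuinely new verification is that the lower bound $\overline{A}^{\mu(X^*,C(X))}$, or equivalently $\overline{\co A}^{\sigma(X^*,B_1(X))}$, recovers the full weak closure $\overline{A}^w$ under separability plus $B_1(X)=C(X)$; this should follow because $C(X)$ then separates the weak-closure points of $A$, but pinning down exactly why no weakly closed convex set is lost is the delicate point and where I would spend the bulk of the argument.
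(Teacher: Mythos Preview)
Your reduction to part (b) has a genuine gap: you claim that a separable $X$ not containing $\ell^1$ has separable dual, so that $A\subset X^*$ is automatically norm-separable and (b) applies. This is false. The James tree space $JT$ is separable, contains no copy of $\ell^1$, yet $JT^*$ is nonseparable. Hence for $X=JT$ a general $A\subset X^*$ need not be norm-separable, and you cannot invoke (b) directly.

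The paper's route avoids this entirely. The key observation you dismissed as ``too strong'' is in fact exactly what holds and what drives the argument: for separable $X$ one has $C(X)=X^{**}$ (trivially, as in the proof of (c)), and for separable $X$ not containing $\ell^1$ the Odell--Rosenthal theorem gives $B_1(X)=X^{**}$. Thus $B_1(X)=C(X)=X^{**}$, so $\sigma(X^*,B_1(X))$ and $\sigma(X^*,C(X))$ both coincide with the weak topology $w$, and $\mu(X^*,C(X))$, $\mu(X^*,B_1(X))$ with the norm topology. From Theorem~\ref{T:X not containing ell1} one then gets $\Ienv(B)=\overline{\co B}^{\sigma(X^*,B_1(X))}=\overline{\co B}^{w}=\overline{\co B}^{\norm{\cdot}}$ for \emph{every} $B\subset X^*$ (Mazur), with no separability of $B$ needed. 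With this in hand, the half-space covering argument from (b) goes through verbatim to give $\cl_{IF}(A)\subset\overline{A}^w$, hence $\Iw(A)=\cl_{IF}(A)=\overline{A}^w$. For $\Ibcl$, since the outer closure is now the weak closure, one immediately has $\overline{A}^w\subset\Ibcl(A)$, and $\Ibcl(A)\subset\Iw(A)$ from (c), so the sandwich closes.

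Your instinct to sandwich $\Ibcl(A)$ was right; what you were missing is the clean identification $B_1(X)=C(X)=X^{**}$ via Odell--Rosenthal, which removes all the ambiguity you were struggling with in the last two paragraphs.
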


\begin{proof}
(a) We start by the first equality on the first row. Inclusion `$\subset$' is obvious. To prove the converse take any $x^*\in X^*\setminus \Icl(A)$. It means that there is a sequence $(A_n)$ of sets covering $A$ such that
$$x^*\notin \overline{\bigcup_{n\in\en}\wscl{A_n}}.$$
I.e., there is some $r>0$ such that
$$x^*+rB_{X^*}\cap \bigcup_{n\in\en}\wscl{A_n}=\emptyset.$$
For each $y^*\in\bigcup_{n\in\en}\wscl{A_n}$ there are, by the Hahn-Banach theorem, some $x_{y^*}\in X$ and $c_{y^*}\in\er$ such that
$$\forall z^*\in B_{X^*}\colon \Re y^*(x_{y^*})<c_{y^*}\le x^*(x_{y^*})+r z^*(x_{y^*}).$$
Set
$$H_{y^*}=\{z^*\in X^*\setsep z^*(x_{y^*})<c_{y^*}\}.$$
Then $H_{y^*}$ is a weak$^*$-open half-space disjoint with $z^*+rB_{X^*}$ and containing $y^*$.
Hence the system 
$$H_{y^*},\quad y^*\in \bigcup_{n\in\en}\wscl{A_n}$$
is a cover of $\bigcup_{n\in\en}\wscl{A_n}$ by weak$^*$-open sets. The set $\bigcup_{n\in\en}\wscl{A_n}$, being weak$^*$-$F_\sigma$ and hence $\sigma$-compact in the weak$^*$-topology, is weak$^*$-Lindel\"of. It follows that there is a countable subcover. I.e., there is a sequence $(y_k^*)$ such that
$$\bigcup_{n\in\en}\wscl{A_n}\subset\bigcup_{k\in\en} H_{y_k^*}.$$
We deduce that
$$\bigcup_{k\in\en} \wscl{H_{y_k^*}}$$
is a cover of $A$ by convex weak$^*$-closed sets disjoint with the norm interior of the ball $x^*+rB_{X^*}$. Hence $x^*\notin\Iccl(A)$, which completes the proof of inclusion `$\supset$' and hence of the equality.

Continuing the first row, the second  inclusion is obvious. The third one follows from Proposition~\ref{P:eight operators}(c) since $\Iw$ is a closure operator and clearly $\Iw(A)\subset \Ienv(A)$ for $A\subset X^*$.

Let us prove the fourth inclusion.  The first part, i.e., the inclusion $\cl_{IF}(A)\subset \Ienv(A)$ is obvious. Let us prove that $\cl_{IF}(A)\subset\wscl A$.
 
 Assume that $x^*\notin\wscl{A}$. Then there are $x_1,\dots,x_n\in X$ and $\varepsilon>0$ such that
 $$\{y^*\in X^*\setsep \abs{\Re(y^*(x_j)-x^*(x_j))}<\varepsilon \mbox{ for }j=1,\dots,n\}\cap A=\emptyset.$$
 For $j=1,\dots,n$ set
 $$A_j^+=\{y^*\in X^*\setsep \Re y^*(x_j)\ge \Re x^*(x_j)+\varepsilon\} \mbox{ and }
 A_j^-=\{y^*\in X^*\setsep \Re y^*(x_j)\le \Re x^*(x_j)-\varepsilon\}.$$
 Then
 $$A\subset \bigcup_{j=1}^n(A_j^+\cup A_j^-)$$
 and, moreover, the sets $A_j^+$ and $A_j^-$ are convex and weak$^*$-closed.
 Hence $\cl_{IF}(A)\subset\bigcup_{j=1}^n(A_j^+\cup A_j^-)$ and therefore $x^*\notin\cl_{IF}(A)$.
 
 All vertical inclusions are obvious and the first three inclusions on the last line as well.
 
 The last inclusion, $\Ibcl(A)\subset\overline{A}^{\sigma(X^*,B_1(X))}$ may be proved in the same way as the inclusion $\cl_{IF}(A)\subset\wscl{A}$ (we just use elements of $B_1(X)$ instead of $X$).
 
 (b) Assume $A$ is norm-separable. Then $\Ienv(A)=\overline{\co A}^{\norm{\cdot}}$ by \cite[Remark 1.1(ii)]{Ienv-israel}. Using the same argument coming from \cite[Proposition 2.2(a)]{FL} we may show that $\Iccl(A)=\overline{A}^{\norm{\cdot}}$. Let us give the argument for the sake of completeness.
 
 Let $(x_n^*)$ be a norm-dense sequence in $A$. Let $\varepsilon>0$ be arbitrary. Then $A\subset \bigcup_n (x_n^*+\varepsilon B_{X^*})$. Since the set $x_n^*+\varepsilon B_{X^*}$ is convex and weak$^*$-closed for each $n$, we deduce that
 $$\Iccl(A)\subset \overline{\bigcup_n (x_n^*+\varepsilon B_{X^*})}^{\norm{\cdot}}\subset A+2\varepsilon B_{X^*}.$$
 Since $\varepsilon>0$ is arbitrary, we deduce that $\Iccl(A)\subset\overline{A}^{\norm{\cdot}}$. The converse inclusion follows from (a).
 
 Further, we have
 $$\overline{A}^w\subset \Iw(A)\subset\cl_{IF}(A).$$
 Indeed, the first inclusion is obvious and the second one follows from (a). To prove the converse inclusions fix $x^*\notin \overline{A}^w$. Then there are $x^{**}_1,\dots,x^{**}_n\in X^{**}$ and $\varepsilon>0$ such that
 $$\{y^*\in X^*\setsep \abs{\Re x_j^{**}(y^*-x^*)}<\varepsilon \mbox{ for }j=1,\dots,n\}\cap A=\emptyset.$$
 For $j=1,\dots,n$ set
 $$A_j^+=\{y^*\in X^*\setsep \Re x_j^{**}(y^*-x^*)\ge \varepsilon\} \mbox{ and }
 A_j^-=\{y^*\in X^*\setsep \Re x_j^{**}(y^*-x^*)\le -\varepsilon\}.$$
 Then
 $$A\subset \bigcup_{j=1}^n(A_j^+\cup A_j^-)$$
 and, moreover, the sets $A_j^+$ and $A_j^-$ are convex and norm-closed.
 Hence 
 $$\begin{aligned}
      \cl_{IF}(A)&\subset\bigcup_{j=1}^n(\Ienv(A_j^+\cap A)\cup \Ienv(A_j^-\cap A))=\bigcup_{j=1}^n\left(\overline{\co (A_j^+\cap A)}^{\norm{\cdot}}\cup\; \overline{\co(A_j^-\cap A)}^{\norm{\cdot}}\right)\\&\subset  \bigcup_{j=1}^n(A_j^+\cup A_j^-)\end{aligned}$$ and therefore $x^*\notin\cl_{IF}(A)$. 
 
 (c) If $X$ is separable, then $C(X)=X^{**}$. Hence the topology $\sigma(X^*,C(X))$ coincides with the weak topology and the topology $\mu(X^*,C(X))$ coincides with the norm topology. Hence the statement easily follows.
 
 (d) If $X$ is separable and does not contain a copy of $\ell_1$, then $B_1(X)=C(X)=X^{**}$ (by \cite{odell-rosenthal}), hence the topologies $\sigma(X^*,B_1(X))$ and $\sigma(X^*,C(X))$ coincide with the weak topology on $X^*$ and the topologies $\mu(X^*,B_1(X))$ and $\mu(X^*,C(X))$ coincide with the norm topology on $X^*$.
 
 The equality $\Ienv(A)=\overline{\co A}^{\norm{\cdot}}$ then follows (for example) from Theorem~\ref{T:X not containing ell1}. 
 
 The equalities $\Iw(A)=\cl_{IF}(A)=\overline{A}^{w}$ may be proved by copying the argument from (b). Furhter, clearly $\overline{A}^w\subset \Ibcl(A)\subset\Iw(A)$, hence $\Ibcl(A)=\overline{A}^w$.
  \end{proof}

\begin{cor}\label{cor:separable case} Let $X$ be a separable Banach space and $A\subset X^*$ a norm-separable subset. Then
$$\overline{A}^{\norm{\cdot}}=\Iccl(A)=\Ibl(A)\subset \overline{A}^w=\Iw(A)=\cl_{IF}(A)=\Ibcl(A).$$
If $X$ does not contain a copy of $\ell_1$, then additionally  $\Ibcl(A)=\overline{A}^{\sigma(X^*,B_1(X))}$.
\end{cor}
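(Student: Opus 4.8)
The plan is to assemble the statement entirely from the four parts of Proposition~\ref{P:comparison of closure operators}, since all the ingredients are already available there; proving Corollary~\ref{cor:separable case} is essentially a matter of locating the places where the general inclusions of part (a) collapse to equalities under the separability hypotheses. First I would record the equalities that are immediate from part (b): as $A$ is norm-separable we already have $\Iccl(A)=\overline{A}^{\norm{\cdot}}$ together with $\Iw(A)=\cl_{IF}(A)=\overline{A}^{w}$, which pins down the outer entries of both chains in the displayed formula, and the middle inclusion $\overline{A}^{\norm{\cdot}}\subset\overline{A}^{w}$ is trivial.

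Next I would establish the left block $\overline{A}^{\norm{\cdot}}=\Ibl(A)$ by squeezing $\Ibl$ between the norm closure and $\Imn$. The lower bound is cheap: by part (c) we have $\overline{A}^{\norm{\cdot}}=\overline{A}^{\mu(X^*,C(X))}$, and the bottom row of the diagram in part (a) gives $\overline{A}^{\mu(X^*,C(X))}\subset\Ibl(A)$. For the upper bound I would route through $\Imn$: part (c) yields $\Ibl(A)\subset\Imn(A)$, while the first column of the diagram in part (a) gives $\Imn(A)\subset\Icl(A)=\Iccl(A)$, and the latter equals $\overline{A}^{\norm{\cdot}}$ by part (b). Combining the two bounds gives $\Ibl(A)=\overline{A}^{\norm{\cdot}}=\Iccl(A)$ (en route this also identifies $\Imn(A)=\overline{A}^{\norm{\cdot}}$).

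Then I would treat the right block, where the only genuinely new point is the equality $\Ibcl(A)=\overline{A}^{w}$. Part (c) supplies one inclusion, $\Ibcl(A)\subset\Iw(A)=\overline{A}^{w}$. For the reverse inclusion $\overline{A}^{w}\subset\Ibcl(A)$ I would argue directly from the definition of $\Ibcl$. Since $X$ is separable, $C(X)=X^{**}$, so the outer closure $\overline{\,\cdot\,}^{\sigma(X^*,C(X))}$ appearing in $\Ibcl$ is exactly the weak closure. Now for any countable cover $A=\bigcup_{n}A_n$ each piece satisfies $A_n\subset\overline{\co A_n}^{\sigma(X^*,B_1(X))}$, whence $A\subset\bigcup_n\overline{\co A_n}^{\sigma(X^*,B_1(X))}$ and therefore $\overline{A}^{w}\subset\overline{\bigcup_n\overline{\co A_n}^{\sigma(X^*,B_1(X))}}^{w}$. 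As this holds for every cover, $\overline{A}^{w}$ lies in every set of which $\Ibcl(A)$ is the intersection, giving $\overline{A}^{w}\subset\Ibcl(A)$ and hence equality.

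Finally, for the supplementary claim I would invoke part (d): when $X$ is additionally $\ell_1$-free, the Odell--Rosenthal theorem gives $B_1(X)=X^{**}$, so $\sigma(X^*,B_1(X))$ coincides with the weak topology and $\overline{A}^{\sigma(X^*,B_1(X))}=\overline{A}^{w}$; combined with $\Ibcl(A)=\overline{A}^{w}$ established above, this yields $\Ibcl(A)=\overline{A}^{\sigma(X^*,B_1(X))}$. No step here is a serious obstacle, the whole argument being bookkeeping on the inclusions of Proposition~\ref{P:comparison of closure operators}; the only point requiring a moment's thought is the reverse inclusion $\overline{A}^{w}\subset\Ibcl(A)$, since parts (a)--(d) supply inclusions only in the opposite direction for $\Ibcl$.
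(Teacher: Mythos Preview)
Your proof is correct and follows essentially the same route as the paper's: both assemble the statement from parts (a)--(d) of Proposition~\ref{P:comparison of closure operators}, squeezing $\Ibl$ between the norm closure and $\Iccl$ via $\Imn$, and squeezing $\Ibcl$ between $\overline{A}^w$ and $\Iw$. The one place you flag as needing ``a moment's thought'', the inclusion $\overline{A}^{w}\subset\Ibcl(A)$, the paper simply calls obvious---and indeed it is immediate from Lemma~\ref{L:abstract Icl}(b) (which gives $\beta(A)\subset\gamma_2(A)$, here $\beta=\overline{\,\cdot\,}^{\sigma(X^*,C(X))}=\overline{\,\cdot\,}^{w}$ since $C(X)=X^{**}$), so your direct cover argument, while correct, is not really needed.
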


\begin{proof}
The first two equalities follow from Proposition~\ref{P:comparison of closure operators}(a)--(c). The equalities $\overline{A}^w=\Iw(A)=\cl_{IF}(A)$ follow from Proposition~\ref{P:comparison of closure operators}(b). By Proposition~\ref{P:comparison of closure operators}(c) we get $\Ibcl(A)\subset\cl_{IF}(A)$. The inclusion $\overline{A}^w\subset \Ibcl(A)$ is obvious. Finally, $\Iccl(A)\subset\Iw(A)$ by Proposition~\ref{P:comparison of closure operators}(a).

If $X$ does not contain a copy of $\ell_1$, then $B_1(X)=X^{**}$, hence $\overline{A}^{w}=\overline{A}^{\sigma(X^*,B_1(X))}$.
\end{proof}

Next we focus on distinguishing the individual closure operators. At first we look at the case of a norm-separable subset of the dual to a separable Banach space. The next example witnesses that in such a case no more inclusions hold, besides those from Proposition~\ref{P:comparison of closure operators}(a) completed by Corollary~\ref{cor:separable case}.

\begin{example2}
(1) Let $X$ be an infinite dimensional separable reflexive space  (for example  $X=\ell_2$). Then  $X^{**}=C(X)=B_1(X)=X$. Hence the topologies $\sigma(X^*,B_1(X))$ and $\sigma(X^*,C(X))$ coincide with the weak (and also with the weak$^*$) topology. Futher, topologies $\mu(X^*,C(X))$ and $\mu(X^*,B_1(X))$ coincide with the norm topology.
 Let $A=S_{X^*}$, the unit sphere of $X^*$. Using Corollary~\ref{cor:separable case} we get
$$\begin{gathered}\Iccl(A)=\Ibl(A)=\overline{A}^{\norm{\cdot}}=A \ (=S_{X^*}),\\
\Iw(A)=\cl_{IF}(A)=\Ibcl(A)=\wscl{A}=\Ienv(A)=B_{X^*}.
\end{gathered}$$
In particular,
$$\Iccl(A)\subsetneqq\Iw(A)\mbox{ and }\Ibl(A)\subsetneqq\Ibcl(A),$$
and, further
$$\Ibcl(A)\not\subset \Iccl(A)\mbox{ and }\Iw(A)\not\subset\Ibl(A).$$

(2) Let $X$ be an infinite dimensional nonreflexive space with  separable dual (for example $X=c_0$). Then  $X^{**}=C(X)=B_1(X)$. Hence the topologies $\sigma(X^*,B_1(X))$ and $\sigma(X^*,C(X))$ coincide with the weak (but not with the weak$^*$) topology. Further, topologies $\mu(X^*,C(X))$ and $\mu(X^*,B_1(X))$ coincide with the norm topology. 

If $A=S_{X^*}$, the unit sphere of $X^*$, we get the same equalities as in (1).

Further, let $\varphi\in X^{**}\setminus X$ and let $Y=\ker \varphi$. Set $B=S_Y$, the unit sphere of $Y$. Then we get
$$\begin{gathered}\Iccl(B)=\Ibl(B)=\overline{B}^{\norm{\cdot}}=B \ (=S_{Y}),\\
\Iw(B)=\cl_{IF}(B)=\Ibcl(B)=\overline{B}^w=\Ienv(B)=\overline{B}^{\sigma(X^*,B_1(X))}=B_{Y},\\
\wscl{B}\supsetneqq B_Y.
\end{gathered}$$
Indeed, the equalities on the first two lines follow from Corollary~\ref{cor:separable case}, the inclusion on the third line follows from the fact that $Y$ is not weak$^*$ closed (and the same holds for $B$).

In particular,
$$\Ienv(B)\cap\wscl{B}\subsetneqq \wscl{B} \mbox{ and }\overline{B}^{\sigma(X^*,B_1(X))}\subsetneqq\wscl{B}.$$

(3) Let $X=J^*$, where $J$ is the James space (see \cite{james} or \cite[Section 3.4]{Albiac-Kalton}). Again $X^{**}=C(X)=B_1(X)$, the topologies $\sigma(X^*,B_1(X))$ and $\sigma(X^*,C(X))$ coincide with the weak topology and the topologies $\mu(X^*,C(X))$ and $\mu(X^*,B_1(X))$ coincide with the norm topology. If $A=S_J\subset J^{**}=X^*$, then
$$\begin{gathered}\Iccl(A)=\Ibl(A)=\overline{A}^{\norm{\cdot}}=A \ (=S_{J}),\\
\Iw(A)=\cl_{IF}(A)=\Ibcl(A)=\overline{A}^w=\Ienv(A)=\overline{A}^{\sigma(X^*,B_1(X))}=B_{J},\\
\wscl{A}=B_{X^*}=B_{J^{**}}\supsetneqq B_J.
\end{gathered}$$
This follows similarly as (2) above, using moreover the Goldstine theorem.

(4) Let $X=\ell_1$. Then $C(X)=X^{**}$ and $B_1(X)=X$. Hence the topology  $\sigma(X^*,C(X))$ coincides with the weak topology and $\mu(X^*,C(X))$ coincides with the norm topology.
Further, $\sigma(X^*,B_1(X))$ coincides with the weak$^*$ topology. The topology $\mu(X^*,B_1(X))$ is the topology of uniform convergence on weakly compact subsets of $X$ (by the Mackey-Arens theorem, cf. \cite[Corollary 3.44]{Fab-BST}). Since $X=\ell_1$ has the Schur property (see, e.g., \cite[Theorem 2.3.6]{Albiac-Kalton}), it further coincides with the topology of uniform convergence on norm-compact subspace of $X$, so on bounded sets it coincides with the weak$^*$-topology. 

Let $A=S_{c_0}$, the unit sphere of $c_0$ canonically embedded into $X^*=\ell_\infty$. Then we get
$$\begin{gathered}\Iccl(A)=\Ibl(A)=\overline{A}^{\norm{\cdot}}=A \ (=S_{c_0}),\\
\Iw(A)=\cl_{IF}(A)=\Ibcl(A)=\overline{A}^w=\Ienv(A)=B_{c_0},\\
\overline{A}^{\sigma(X^*,B_1(X))}=\wscl{A}=B_{X^*}=B_{\ell_\infty}.
\end{gathered}$$
Again, the first two lines follow from Corollary~\ref{cor:separable case} and the third line follows from the Goldstine theorem.

In particular,
$$\Ibcl(A)\subsetneqq\overline{A}^{\sigma(X^*,B_1(X))}\mbox{ and }\overline{A}^{\sigma(X^*,B_1(X))}\not\subset\Ienv(A).$$

(5) Let $X=c_0$ and $A=\{\pm e_n^*\setsep n\in\en\}$, where $e_n^*$ are the canonical basic vectors in $\ell_1=c_0^*$. Then $A$ is weakly closed, $\wscl{A}=A\cup\{0\}$ and $\Ienv(A)=\overline{\co A}^{\norm{\cdot}}=B_{X^*}$.
In particular, by Corollary~\ref{cor:separable case} we deduce that
$$\cl_{IF}(A)\subsetneqq\Ienv(A)\cap \wscl{A}.$$
Further, since $B_1(X)=X^{**}$, $A$ is also $\sigma(X^*,B_1(X))$-closed, we get
$$\Ienv(A)\cap \wscl{A}\not\subset \overline{A}^{\sigma(X^*,B_1(X))}.$$
\end{example2}

We continue by distinguishing the closure operators in the general case, without assuming separability. In this case we are able to distinguish more operators (see the following example) but we do not know the exact picture (cf. Question~\ref{q:remaining}
below).

\begin{example2}\label{exa:nonseparable}
(1) Let $X=\ell_1(\Gamma)$ for an uncountable set $\Gamma$ and $A=\{e_\gamma^*\setsep \gamma\in \Gamma\}$, the canonical unit vectors in $X^*=\ell_\infty(\Gamma)$. Then $A$ is norm-closed.

Since $X$ has the Schur property, weakly compact subsets in $X$ are norm-compact. It follows that the topology $\mu(X^*,X)$ coincides with the weak$^*$-topology on bounded sets. Thus, $0\in\overline{B}^{\mu(X^*,X)}$ for any infinite $B\subset A$. It follows that $0\in\Imn(A)$, hence
$$\overline{A}^{\norm{\cdot}}\subsetneqq \Imn(A).$$
Similarly we deduce that $0\in\Ienv(B)$ for any uncountable $B\subset A$. Therefore, $0\in\cl_{IC}(A)$, so
$$\overline{A}^{\norm{\cdot}}\subsetneqq \cl_{IC}(A).$$

Further, since $X$ is weakly sequentially complete, we have $B_1(X)=X$, so $\mu(X^*,B_1(X))=\mu(X^*,X)$. Hence, similarly as in the previous paragraph we see that $0\in\Ibl(A)$. It follows that
$$\overline{A}^{\norm{\cdot}}\subsetneqq \Ibl(A).$$


(2) Let $X=\ell_2(\Gamma)$ for an uncountable set $\Gamma$ and $A=\{e_\gamma^*\setsep \gamma\in \Gamma\}$, the canonical unit vectors in $X^*=\ell_2(\Gamma)$. Then $A$ is norm-closed.

Since $X$ is reflexive, we have $B_1(X)=C(X)=X=X^{**}$. In particular, the topologies $\mu(X^*,X)$, $\mu(X^*,C(X))$ and $\mu(X^*,B_1(X))$ coincide with the norm topology.
It follows that $\Imn(A)=\Ibl(A)=\overline{A}^{\norm{\cdot}}=A$. 

On the other hand, for any uncountable $B\subset A$ we have $0\in\wscl{A} (=\overline{A}^w)$. Thus $0\in\Icl(A)$. We deduce that
$$\Imn(A)\subsetneqq\Icl(A)
\mbox{ and }\Icl(A)\not\subset\Ibl(A).$$

(3) Let $X=c_0(\Gamma)$ for an uncountable set $\Gamma$. Then $X^*=\ell_1(\Gamma)$ and $X^{**}=\ell_\infty(\Gamma)$. Moreover, $B_1(X)=C(X)=\ell_\infty^c(\Gamma)$, the subset of $\ell_\infty(\Gamma)$ formed by element with countable support. Since $C(X)\subsetneqq X^{**}$, we deduce that the topology $\mu(X^*,C(X))$ is strictly weaker than the norm topology.  Indeed, $\mu(X^*,C(X))$ is the topology of uniform convergence on absolutely convex $\sigma(C(X),X^*)$-compact subsets of $C(X)$. Since any $\sigma(C(X),X^*)$-compact subset of $C(X)$ is weak$^*$-compact (hence bounded) as a  subset of $X^{**}$, we deduce that $\mu(X^*,C(X))$ is  weaker than the norm topology. Since the dual of $(X^*,\mu(X^*,C(X)))$ is $C(X)\subsetneqq X^{**}$, we deduce that $\mu(X^*,C(X))$ must be strictly weaker. It follows that there is a normed-closed subset $A\subset X^*$ which is not $\mu(X^*,C(X))$-closed, hence
$$\overline{A}^{\norm{\cdot}}\subsetneqq \overline{A}^{\mu(X^*,C(X)}.$$

\end{example2}

Next we focus on the closure operators applied to convex sets. It is natural to ask whether the results of the above closure operators applied to a convex set are again convex. It seems not to be clear. Anyway, the situation in the convex case is easier, in particular, we have the following abstract result.

\begin{lemma}
Let $X$ be a vector space. Let $\alpha,\beta$ be two mappings with properties from Lemma~\ref{L:abstract Icl}. Let $\gamma_1$ and $\gamma_2$ be the closure operators defined as in Lemma~\ref{L:abstract Icl}. Then the following assertions are valid.
\begin{enumerate}[\rm(a)]
    \item Assume that for any $A\subset X$ we have
    $$\forall\lambda\in\ef\colon \alpha(\lambda A)=\lambda\alpha(A)\ \&\ \beta(\lambda A)=\lambda\beta(A).$$
    Then 
    for any $A\subset X$ we have
    $$\forall\lambda\in\ef\colon \gamma_1(\lambda A)=\lambda\gamma_1(A)\ \&\ \gamma_2(\lambda A)=\lambda\gamma_2(A).$$

\item Assume that $\alpha,\beta$ are translation invariant, i.e., $\alpha(x+A)=x+\alpha(A)$ and $\beta(x+A)=x+\beta(A)$ for each $x\in X$ and $A\subset X$. Then 
$$\gamma_j(A)+\gamma_j(B)\subset\gamma_j(\gamma_j(A+B)),\quad A,B\subset X, j=1,2.$$
\end{enumerate}
\end{lemma}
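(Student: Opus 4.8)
The plan is to prove both parts directly from the definitions of $\gamma_1$ and $\gamma_2$, reducing each to the corresponding homogeneity or translation property of the building blocks $\alpha$ and $\beta$, and using the key observation that scaling by a fixed $\lambda$ (or translating by a fixed $x$) induces a bijection on the collection of covers of a set.

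For part (a), I would fix $A\subset X$ and $\lambda\in\ef$. The case $\lambda=0$ is trivial (if $0\in\ef$ we treat $0\cdot A$ as $\{0\}$ or $\emptyset$ according to whether $A$ is empty, and both sides agree by property (i)), so assume $\lambda\neq 0$. The crucial bookkeeping fact is that finite (resp. countable) covers of $\lambda A$ correspond bijectively to finite (resp. countable) covers of $A$ via $A=\bigcup_k A_k \iff \lambda A=\bigcup_k \lambda A_k$. Given the hypothesis that $\alpha(\lambda A_k)=\lambda\alpha(A_k)$ and $\beta(\lambda B)=\lambda\beta(B)$, I would compute for each cover
$$
\beta\Bigl(\bigcup_k \alpha(\lambda A_k)\Bigr)=\beta\Bigl(\bigcup_k \lambda\alpha(A_k)\Bigr)=\beta\Bigl(\lambda\bigcup_k \alpha(A_k)\Bigr)=\lambda\,\beta\Bigl(\bigcup_k \alpha(A_k)\Bigr),
$$
using that scalar multiplication by $\lambda\neq0$ commutes with unions. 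Intersecting over all covers and using that $\lambda(\bigcap_i S_i)=\bigcap_i \lambda S_i$ for $\lambda\neq0$ then yields $\gamma_j(\lambda A)=\lambda\gamma_j(A)$ for both $j=1,2$. The only mild care needed is to note that the cover-correspondence is a genuine bijection, so the two intersections range over matching index sets.

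For part (b), I would fix $A,B\subset X$ and aim to show $\gamma_j(A)+\gamma_j(B)\subset\gamma_j(\gamma_j(A+B))$. Take $a\in\gamma_j(A)$ and $b\in\gamma_j(B)$; I want $a+b\in\gamma_j(\gamma_j(A+B))$. The natural strategy is to use translation invariance to show first that $a+\gamma_j(B)\subset\gamma_j(a+B)$ and similarly on the other side, then combine. Concretely, translation invariance of $\alpha$ and $\beta$ passes to $\gamma_j$ by the same cover-correspondence argument as in (a) (translation by a fixed $x$ is a bijection on covers, and $x+(\bigcup_k S_k)=\bigcup_k(x+S_k)$, $x+\bigcap_i S_i=\bigcap_i(x+S_i)$), giving $\gamma_j(x+C)=x+\gamma_j(C)$ for all $x$ and $C$. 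Using this, since $b\in\gamma_j(B)$ we get $a+b\in a+\gamma_j(B)=\gamma_j(a+B)$. Now I would use monotonicity of $\gamma_j$: since $a\in\gamma_j(A)$, translation invariance gives $a+B\subset\gamma_j(A)+B=\bigcup_{b'\in B}(b'+\gamma_j(A))=\bigcup_{b'\in B}\gamma_j(b'+A)$, and each $\gamma_j(b'+A)\subset\gamma_j(A+B)$ by monotonicity (as $b'+A\subset A+B$). Hence $a+B\subset\gamma_j(A+B)$, so $\gamma_j(a+B)\subset\gamma_j(\gamma_j(A+B))$ by monotonicity, and therefore $a+b\in\gamma_j(\gamma_j(A+B))$.

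The main obstacle I anticipate is purely organizational rather than conceptual: one must be careful that the translated-invariance property of $\gamma_j$ is genuinely inherited, which requires verifying that the defining formulas behave well under the bijection of covers induced by translation, and that both unions and intersections commute with the translation. Once the lemma ``$\gamma_j(x+C)=x+\gamma_j(C)$'' is established cleanly, part (b) is a short monotonicity-plus-translation chase. A small point worth checking is that in part (b) we land in $\gamma_j(\gamma_j(A+B))$ rather than $\gamma_j(A+B)$ precisely because $\gamma_j$ is not assumed idempotent here; the double application is exactly what the two successive monotonicity steps (on $a+B\subset\gamma_j(A+B)$) produce, so no idempotency is needed and the statement is optimal as written.
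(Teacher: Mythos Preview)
Your proposal is correct and follows essentially the same approach as the paper. In part (a) the paper proves the inclusion $\gamma_2(\lambda A)\subset\lambda\gamma_2(A)$ for all $\lambda$ and then, for $\lambda\ne0$, obtains the reverse inclusion via the substitution $A\mapsto\tfrac1\lambda\cdot\lambda A$, whereas you use the bijection of covers directly; in part (b) the paper packages the argument as the set-level lemma $A+\gamma_j(B)\subset\gamma_j(A+B)$ applied twice, while you run the same translation-plus-monotonicity chase pointwise --- but these are only cosmetic reorderings of the same idea.
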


\begin{proof}
(a) Let us provide a proof for $\gamma_2$. The case of $\gamma_1$ is completely analogous.

Fix $\lambda\in\ef$. Assume $A=\bigcup_n A_n$. Then $\lambda A=\bigcup_n \lambda A_n$ and hence
$$\gamma_2(\lambda A)\subset \beta\left(\bigcup_n \alpha(\lambda A_n)\right)
=\lambda\beta\left(\bigcup_n \alpha(A_n)\right).$$
Since $(A_n)$ is an arbitrary cover of $A$, we deduce that $\gamma_2(\lambda A)\subset \lambda \gamma_2(A)$. The converse inclusion is obvious for $\lambda=0$. If $\lambda\ne 0$, then
$$\lambda \gamma_2(A)=\lambda \gamma_2\left(\frac1\lambda\cdot\lambda A\right)\subset \lambda\cdot\frac1\lambda \gamma_2(\lambda A)=\gamma_2(\lambda A).$$
This completes the proof of assertion (a).

(b) If $\alpha,\beta$ are translation invariant, clearly $\gamma_1,\gamma_2$ are translation invariant as well. So, for any $A,B\subset X$ we have
$$A+\gamma_j(B)=\bigcup_{x\in A} (x+\gamma_j(B))=\bigcup_{x\in A} \gamma_j(x+B)\subset\gamma_j\left(\bigcup_{x\in A} x+B\right)=\gamma_j(A+B),$$
hence
$$\gamma_j(A)+\gamma_j(B)\subset\gamma_j(\gamma_j(A)+B)\subset\gamma_j(\gamma_j(A+B)).$$
\end{proof}

\begin{prop}\label{P:convex comparison}
Let $X$ be a Banach space and let $A\subset X^*$ be a convex set.
\begin{enumerate}[\rm(a)]
\item The following inclusions hold:
    $$\begin{array}{ccccccccc}
        \Icl(A)&=&\Iccl(A)&\subset&\Iw(A)&\subset&\cl_{IF}(A)&&   \\
        \bigcup &&\bigcup&&&\sesup&&& \\
        \Imn(A) &&\cl_{IC}(A)&& \Ienv(A)&\subset&\overline{A}^{\sigma(X^*,B_1(X))} &\subset&\wscl{A} \\
        \bigcup &\sesub&&\sesub&&&\bigcup&& \\
        \overline{A}^{\norm{\cdot}}&\subset&\overline{A}^{\mu(X^*,C(X))}&\subset&\Ibl(A)&\subset&\Ibcl(A)&& 
    \end{array}
    $$
    \item If $A$ is norm-separable, then the following inclusions hold:
    $$\begin{array}{ccccccccc}
        \Icl(A)&=&\Iccl(A)&=&\Iw(A)&=&\cl_{IF}(A)&&   \\
        \| &&\|&&&\seeq&&& \\
        \Imn(A) &&\cl_{IC}(A)&=& \Ienv(A)&\subset&\overline{A}^{\sigma(X^*,B_1(X))} &\subset&\wscl{A} \\
        \| &\seeq&&\seeq&&&\bigcup&& \\
        \overline{A}^{\norm{\cdot}}&=&\overline{A}^{\mu(X^*,C(X))}&=&\Ibl(A)&=&\Ibcl(A)&& 
    \end{array}
    $$
    \item If $X$ does not contain a copy of $\ell_1$, then the following inclusions hold:
    $$\begin{array}{ccccccccc}
        \Icl(A)&=&\Iccl(A)&\subset&\Iw(A)&\subset&\cl_{IF}(A)&&   \\
        \bigcup &&\bigcup&&&&\|&& \\
        \Imn(A) &&\cl_{IC}(A)&& \Ienv(A)&=&\overline{A}^{\sigma(X^*,B_1(X))} &\subset&\wscl{A} \\
        \bigcup &\sesub&&\seeq&&&\|&& \\
        \overline{A}^{\norm{\cdot}}&\subset&\overline{A}^{\mu(X^*,C(X))}&=&\Ibl(A)&=&\Ibcl(A)&& 
    \end{array}
    $$
\end{enumerate}
\end{prop}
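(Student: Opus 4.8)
The plan is to read each of the three diagrams as a refinement of the general (non-convex) diagram of Proposition~\ref{P:comparison of closure operators}(a), adding only the relations that genuinely use convexity together with the extra hypotheses. For part (a), I would first note that every relation \emph{not} involving $\Ienv(A)$ already appears in Proposition~\ref{P:comparison of closure operators}(a), which holds for arbitrary $A$: in particular $\Icl(A)=\Iccl(A)$, the top row up to $\cl_{IF}(A)$, the whole bottom row, and $\Ibcl(A)\subset\overline{A}^{\sigma(X^*,B_1(X))}$. It then remains to insert $\Ienv(A)$. The inclusion $\cl_{IF}(A)\subset\Ienv(A)$ is part of Proposition~\ref{P:comparison of closure operators}(a) (the clause $\cl_{IF}(A)\subset\Ienv(A)\cap\wscl A$), while $\Ienv(A)\subset\overline{A}^{\sigma(X^*,B_1(X))}$ follows from Proposition~\ref{P:Ienv between closures} once we use $\co A=A$, and $\overline{A}^{\sigma(X^*,B_1(X))}\subset\wscl A$ is immediate from $X\subset B_1(X)$. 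The only point needing dual-pair machinery is $\overline{A}^{\mu(X^*,C(X))}\subset\Ienv(A)$: here I would invoke that $\mu(X^*,C(X))$ and $\sigma(X^*,C(X))$ are compatible topologies for $\langle X^*,C(X)\rangle$, so a \emph{convex} set has the same closure in both, and then apply Proposition~\ref{P:Ienv between closures} again.

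For part (b) I would add norm-separability, whose engine is $\Ienv(A)=\overline{\co A}^{\norm{\cdot}}=\overline{A}^{\norm{\cdot}}$ (Proposition~\ref{P:comparison of closure operators}(b) with convexity). Combined with Mazur's theorem ($\overline{A}^{\norm{\cdot}}=\overline{A}^{w}$) and the sandwich $\overline{A}^{w}\subset\Iw(A)\subset\cl_{IF}(A)\subset\Ienv(A)$ from part (a), the whole top block collapses to $\overline{A}^{\norm{\cdot}}$, and $\Imn(A),\cl_{IC}(A)$ are squeezed between $\overline{A}^{\norm{\cdot}}$ and $\Icl(A)=\Iccl(A)=\overline{A}^{\norm{\cdot}}$. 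For the middle of the bottom block I would use $\overline{A}^{\mu(X^*,C(X))}=\overline{A}^{\sigma(X^*,C(X))}\subset\Ienv(A)=\overline{A}^{\norm{\cdot}}$ (compatible topologies plus Proposition~\ref{P:Ienv between closures}), which pins $\overline{A}^{\mu(X^*,C(X))}=\overline{A}^{\norm{\cdot}}$.

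The genuinely delicate point, which I expect to be the main obstacle, is $\Ibl(A)=\Ibcl(A)=\overline{A}^{\norm{\cdot}}$: a priori these only lie inside $\overline{A}^{\sigma(X^*,B_1(X))}$, which may be strictly larger than the norm closure. To control them I would cover $A$ by the traces of an $\varepsilon$-net: fixing a norm-dense sequence $(x_n^*)$ in $A$ and putting $A_n=A\cap(x_n^*+\varepsilon\overline{B_{X^*}})$, each inner closure $\overline{\co A_n}^{\sigma(X^*,B_1(X))}$ remains inside the weak$^*$-closed convex ball $x_n^*+\varepsilon\overline{B_{X^*}}$ (a weak$^*$-closed set is $\sigma(X^*,B_1(X))$-closed), so the union lies in the convex set $A+\varepsilon\overline{B_{X^*}}$. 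By the Hahn--Banach description of $\sigma(X^*,C(X))$-closed convex sets, $\overline{A+\varepsilon\overline{B_{X^*}}}^{\sigma(X^*,C(X))}=\{z^*\colon \Re\phi(z^*)\le\sup_A\Re\phi+\varepsilon\norm{\phi}\ \forall\phi\in C(X)\}$; intersecting over $\varepsilon>0$ leaves exactly $\overline{A}^{\sigma(X^*,C(X))}$, which by the previous paragraph equals $\overline{A}^{\norm{\cdot}}$. Hence $\Ibcl(A)\subset\overline{A}^{\norm{\cdot}}$, and likewise $\Ibl(A)$, giving equality; the two remaining entries $\overline{A}^{\sigma(X^*,B_1(X))}$ and $\wscl A$ stay where Proposition~\ref{P:comparison of closure operators}(a) places them.

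For part (c) I would exploit $B_1(X)=C(X)$ from Theorem~\ref{T:X not containing ell1}, which forces the inner and outer topologies defining $\Ibl$ and $\Ibcl$ to coincide; their defining double closures then collapse, so for convex $A$ one gets $\Ibl(A)=\overline{A}^{\mu(X^*,C(X))}$ and $\Ibcl(A)=\overline{A}^{\sigma(X^*,B_1(X))}$ (squeezing with the general inclusions of part (a)). Since $X$ omits $\ell^1$, Theorem~\ref{T:X not containing ell1} also yields $\Ienv(A)=\overline{A}^{\sigma(X^*,B_1(X))}$; and because condition~(1) of Theorem~\ref{T:characterization} is then satisfied by $\tau=\sigma(X^*,B_1(X))$, its condition~(3) gives $\cl_{IF}(A)=\Ienv(A)$ for convex $A$. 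Finally $\overline{A}^{\mu(X^*,C(X))}=\overline{A}^{\sigma(X^*,C(X))}=\overline{A}^{\sigma(X^*,B_1(X))}=\Ienv(A)$, by compatibility of the Mackey and weak topologies on convex sets together with $B_1(X)=C(X)$, which ties the whole bottom block to $\Ienv(A)$ and completes the diagram.
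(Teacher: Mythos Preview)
Your argument is correct and tracks the paper's proof closely; the overall architecture (reduce to Proposition~\ref{P:comparison of closure operators}(a), then use Proposition~\ref{P:Ienv between closures} and the Mackey/weak coincidence on convex sets to insert $\Ienv(A)$) is exactly what the paper does.

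There are two small but genuine differences in implementation. In part~(b), once you reach $\Ibcl(A)\subset\overline{A+\varepsilon B_{X^*}}^{\sigma(X^*,C(X))}$, you intersect over $\varepsilon$ and use the Hahn--Banach half-space description to land directly in $\overline{A}^{\sigma(X^*,C(X))}$, which you have already identified with $\overline{A}^{\norm{\cdot}}$. The paper instead passes through $\Ienv(A+\varepsilon B_{X^*})$ via Proposition~\ref{P:Ienv between closures} and then bounds this by the explicit countable cover $A+\varepsilon B_{X^*}\subset\bigcup_n(x_n^*+2\varepsilon B_{X^*})$ plugged into the very definition of $\Ienv$. Your route is a bit more direct and avoids re-invoking the $\Ienv$ machinery. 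In part~(c), you obtain $\cl_{IF}(A)=\Ienv(A)$ by quoting Theorem~\ref{T:characterization} (condition~(1) holds with $\tau=\sigma(X^*,B_1(X))$, hence condition~(3)), whereas the paper argues one level lower: from $\overline{B}^{\sigma(X^*,B_1(X))}\subset\Ienv(B)$ for all $B$ and Proposition~\ref{P:eight operators}(c), the $\sigma(X^*,B_1(X))$-closure is dominated by $\cl_{IF}$, so for convex $A$ one gets the sandwich $\Ienv(A)=\overline{A}^{\sigma(X^*,B_1(X))}\subset\cl_{IF}(A)\subset\Ienv(A)$. Both arguments are valid; yours is shorter but relies on a later theorem, while the paper's is self-contained at the level of the closure-operator lemmas.
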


\begin{proof}
(a) This follows from Proposition~\ref{P:comparison of closure operators}(a) and Proposition~\ref{P:Ienv between closures}.

(b) In this case we have $\Ienv(A)=\overline{A}^{\norm{\cdot}}$ (see Proposition~\ref{P:comparison of closure operators}(b)). Hence, using (a) we get most of the equalities, except for the second and the third one on the last row.

Let us prove $\Ibcl(A)\subset\overline{A}^{\norm{\cdot}}$. Let $(x_n^*)$ be a norm dense sequence in $A$. Fix $\varepsilon>0$. Then
$A\subset \bigcup_n(x_n^*+\varepsilon B_{X^*})$. It follows that
$$\Ibcl(A)\subset \overline{\bigcup_n(x_n^*+\varepsilon B_{X^*})}^{\sigma(X^*,C(X))}\subset \overline{A+\varepsilon B_{X^*}}^{\sigma(X^*,C(X))}\subset \Ienv(A+\varepsilon B_{X^*}).$$
Further,
$$A+\varepsilon B_{X^*}\subset \bigcup_n (x_n^*+2\varepsilon B_{X^*}),$$
hence 
$$\Ienv(A+\varepsilon B_{X^*})\subset \overline{\co  \bigcup_n (x_n^*+2\varepsilon B_{X^*})}^{\norm{\cdot}}\subset\overline{A+2\varepsilon B_{X^*}}^{\norm{\cdot}}.$$
Since $\varepsilon>0$ is arbitrary, we deduce that $\Ibcl(A)\subset\overline{A}^{\norm{\cdot}}$ which completes the proof.

(c) We start by using (a).  Further, by Theorem~\ref{T:X not containing ell1} we have $B_1(X)=C(X)$. Hence $\overline{A}^{\mu(X^*,C(X))}=\overline{A}^{\sigma(X^*,B_1(X))}$, which proves the  equalities on the second and the third rows.

By Theorem~\ref{T:X not containing ell1} we further get
and $\Ienv(B)=\overline{\co B}^{\sigma(X^*,B_1(X))}$ for any $B\subset X^*$. Hence $\overline{B}^{\sigma(X^*,B_1(X))}\subset\Ienv(B)$ for each $B\subset X^*$. By Proposition~\ref{P:eight operators}(c) we deduce that $\overline{B}^{\sigma(X^*,B_1(X))}\subset\cl_{IF}(B)$ for $B\subset X^*$. 

Therefore, if $A$ is convex, then
$$\Ienv(A)=\overline{A}^{\sigma(X^*,B_1(X))}\subset\cl_{IF}(A)\subset\Ienv(A),$$
thus the remaining equality holds.
\end{proof}
 
In the next example we show that in case $A$ is convex and norm separable, no more equalities hold.

\begin{example2}\label{ex:convex separable}
(1) Let $X=\ell_1$ and $A=c_0\subset\ell_\infty=(\ell_1)^*$. Then 
$$\Ienv(A)=c_0\subsetneqq\ell_\infty=\overline{A}^{\sigma(X^*,B_1(X))}$$
by Example~\ref{ex:ell1}.

(2) Let $X=c_0$. Then $X^*=\ell_1$ and $X^{**}=\ell_\infty$. Let $\varphi\in X^{**}\setminus X$ and $A=\Ker\varphi$. Then
$$\overline{A}^{\sigma(X^*,B_1(X))}=A\subsetneqq \ell_1=\wscl{A}.$$
\end{example2}

We continue by collecting examples in duals to spaces not containing $\ell_1$.

\begin{example2}
(1) To show that we may have $\overline{A}^{\sigma(X^*,B_1(X))}\subsetneqq\wscl{A}$ we may use Example~\ref{ex:convex separable}(2).

(2) Let $X=c_0(\Gamma)$ for an uncountable set $\Gamma$, as in Example~\ref{exa:nonseparable}(3). Then $X^*=\ell_1(\Gamma)$, $X^{**}=\ell_\infty(\Gamma)$ and 
$$B_1(X)=C(X)=\ell_\infty^c(\Gamma)=\{g\in\ell^\infty(\Gamma)\setsep \spt g\mbox{ is countable}\}.$$ Let
$$A=\left\{f\in\ell_1(\Gamma)\setsep \sum_\gamma f(\gamma)=0\right\}.$$
Then $A$ is a norm closed linear subspace which is not $\mu(X^*,C(X))$-closed. Hence, $\overline{A}^{\mu(X^*,C(X))}=X^*$ (note that $A$ is a hyperplane and its closure is a linear subspace). Thus 
$$\overline{A}^{\norm{\cdot}}\subsetneqq \overline{A}^{\mu(X^*,C(X))}.$$

Next we observe that $\Icl(A)=X^*$. To this end take $f\in X^*\setminus A$. Then
$$c=\sum_\gamma f(\gamma) \ne0$$
and $\spt f$ is countable. Further,
$$B=\{f-c e_\gamma\setsep \gamma\in \Gamma\setminus \spt f\}\subset A.$$
If $A=\bigcup A_n$, then there is some $n\in\en$ with $B\cap A_n$ uncountable. Then
$$f\in\wscl{A_n\cap B}\subset\wscl{A_n}.$$
Since the cover $(A_n)$ is arbitrary, we deduce $f\in\Icl(A)$.

We continue by showing that $\Imn(A)=A$. To this end take $f\in X^*\setminus A$. Then
$$c=\sum_\gamma f(\gamma) \ne0.$$
Fix an arbitrary $\varepsilon\in(0,\frac{\abs{c}}4)$. Find a finite set $F\subset\Gamma$ with
$$\sum_{\gamma\in\Gamma\setminus F}\abs{f(\gamma)}\le\varepsilon.$$
For $n\in\en$ set
$$Z_n=\left\{g\in\ell_1(\Gamma)\setsep \exists H\subset\Gamma\colon \card H\le n\ \&\ \sum_{\gamma\in\Gamma\setminus H}\abs{g(\gamma)}<\varepsilon\right\}$$
Then $(Z_n)$ is an increasing cover of $X^*$ and, moreover, each $Z_n$ is weak$^*$ closed. (This easily follows from the fact that $g\in Z_n$ if and only if $g=g_1+g_2$ such that $\card\spt g_1\le n$ and $\norm{g_2}\le\varepsilon$.) We are going to show that
$$f\notin \overline{\bigcup_n \overline{A\cap Z_n}^{\mu(X^*,X)}}^{\norm{\cdot}}.$$

Note that the set $\{0\}\cup\{e_\gamma\setsep\gamma\in\Gamma\}$ is weakly compact in $X=c_0(\Gamma)$, hence the norm $\norm{\cdot}_\infty$ is $\mu(X^*,X)$-continuous on $X^*=\ell_1(\Gamma)$. Further, for each $k\in\en$ set
$$p_k(g)=\sup\left\{\sum_{\gamma\in H}\abs{g(\gamma)}\setsep H\subset\Gamma,\card\Gamma\le k\},\quad g\in\ell_1(\Gamma)\right\}.$$
Then $p_k$ is a norm on $\ell_1(\Gamma)$ which is equivalent to $\norm{\cdot}_\infty$ (note that $\norm{\cdot}_\infty\le p_k\le k\cdot\norm{\cdot}_\infty$), hence it is also $\mu(X^*,X)$-continuous.

Set $m=\card F$. 
Assume that $g\in A\cap Z_n$. Then there is $H\subset \Gamma$ with $\card H\le n$ such that $\sum_{\gamma\in\Gamma\setminus H}\abs{g(\gamma)}<\varepsilon$. Then
$$\begin{aligned}
\sum_{\gamma\in F\cup H}\abs{f(\gamma)-g(\gamma)}&\ge
\abs{\sum_{\gamma\in F\cup H}(f(\gamma)-g(\gamma))} \ge 
\abs{\sum_{\gamma\in \Gamma}(f(\gamma)-g(\gamma))}
-\abs{\sum_{\gamma\in \Gamma\setminus F\cup H}(f(\gamma)-g(\gamma))}\ge c-2\varepsilon,
\end{aligned}$$
hence
$p_{m+n}(f-g)\ge c-2\varepsilon$, Since $g\in A\cap Z_n$ is arbitrary, we get
$$p_{m+n}-\dist (f,A\cap Z_n)\ge c-2\varepsilon.$$
Since $p_{m+n}$ is $\mu(X^*,X)$-continuous, we deduce that
$$p_{m+n}-\dist (f,\overline{A\cap Z_n}^{\mu(X^*,X)})\ge c-2\varepsilon.$$
SInce $p_{m+n}\le \norm{\cdot}_1$, we deduce that
$$\norm{\cdot}_1-\dist (f,\overline{A\cap Z_n}^{\mu(X^*,X)})\ge c-2\varepsilon,$$
hence
$$\norm{\cdot}_1-\dist \left(f,\bigcup_n\overline{A\cap Z_n}^{\mu(X^*,X)}\right)\ge c-2\varepsilon,$$
so $f\notin \Imn(A)$.

We thus have 
$$\Imn(A)\subsetneqq\Icl(A).$$

Further, similarly as $\Icl(A)=X^*$ we may prove $\cl_{IC}(A)=X^*$.

Summarizing, we have
$$\overline{A}^{\norm{\cdot}}\subsetneqq \overline{A}^{\mu(X^*,C(X))}, \overline{A}^{\norm{\cdot}}\subsetneqq \cl_{IC}(A), \Imn(A)\subsetneqq\Icl(A)$$
and also
$$\cl_{IC}(A)\not\subset\Imn(A), \overline{A}^{\mu(X^*,C(X))}\not\subset\Imn(A).$$

(3) Let $X=\C([0,\omega_1])$, the space of continuous functions on the ordinal interval $[0,\omega_1]$. Then
$X^*=\ell_1([0,\omega_1])$ (note that $[0,\omega_1]$ is a scattered compact space and hence each Radon measure on it is countably supported) and $X^{**}=\ell_\infty([0,\omega_1])$. Observe that
$$B_1(X)=C(X)=\{f\in \ell_\infty([0,\omega_1])\setsep f \mbox{ is continuous at }\omega_1\}.$$
Set 
$$A=\{g\in\ell_1([0,\omega_1])\setsep g(\omega_1)=0\}.$$
Then $A$ is a norm-closed hyperplane in $X^*$. It is not $\mu(X^*,C(X))$-closed (since the characteristic function of $\{\omega_1\}$ does not belong to $C(X)$), hence $\overline{A}^{\mu(X^*,C(X))}=X^*$. 

Next we are goint to show that $\Imn(A)=X^*$. To this end fix $g\in X^*\setminus A$. Then $g=g_0+c\delta_{\omega_1}$, where $g_0$ is supported by $[0,\omega_1)$ and $c\ne 0$. Fix $\gamma<\omega_1$ such that $g_0|_{(\gamma,\omega_1]}=0$. Set
$$\Gamma=\{\alpha\in(\gamma,\omega_1)\setsep \alpha\mbox{ is a limit ordinal}\}$$
Then $\Gamma$ is a closed unbounded set in $[0,\omega_1)$ and
$$B=\{g_0+c\delta_{\alpha}\setsep \alpha\in\Gamma\}\subset A.$$
Let $(A_n)$ be a countable cover of $A$. It follows from \cite[Theorem 8.3]{jech} there is some $n\in\en$ such that the set
$$\Gamma_n=\{\alpha\in\Gamma\setsep g_0+c\delta_\alpha \in A_n\}$$
is stationary (i.e., it intersects each closed unbounded set, see, e.g., \cite[Definition 8.1]{jech}). We claim that 
$$g\in\overline{B\cap A_n}^{\mu(X^*,X)}.$$
Assume not. Then there is a weakly compact set $L\subset \C([0,\omega_1])$ such that
$$\inf_{h\in B\cap A_n}\sup_{f\in L}\abs{\ip{h-g}{f}} >0.$$
Fix $\varepsilon>0$ such that
$$\inf_{h\in B\cap A_n}\sup_{f\in L}\abs{\ip{h-g}{f}} >\varepsilon.$$
Then for each $\alpha\in\Gamma_n$ there is some $f_\alpha\in L$ such that
$$\abs{\ip {g_0+c\delta_\alpha-g}{f_\alpha}}>\varepsilon,$$
i.e.,
$$\abs{c(f_\alpha(\alpha)-f_\alpha(\omega_1))}>\varepsilon.$$
Since each $\alpha\in\Gamma_n$ is a limit ordinal and each $f_\alpha$ is a continuous function, there is $\gamma_\alpha<\alpha$ such that
$$\abs{c(f_\alpha(\gamma)-f_\alpha(\omega_1))}>\varepsilon \mbox{ for }\gamma\in(\gamma_\alpha,\alpha].$$
By Fodor's Pressing down lemma (see, e.g., \cite[Theorem 8.7]{jech}) there is a stationary set $\Gamma'\subset\Gamma_n$ and $\beta<\omega_1$ such that
$$\gamma_\alpha=\beta\mbox{ for }\alpha\in\Gamma'.$$
We now construct a sequence $(\alpha_n)$ in $\Gamma'$ as follows:
\begin{itemize}
    \item $\alpha_1\in\Gamma'$ is arbitrary.
    \item Given $\alpha_n$, we find $\alpha_{n+1}\in \Gamma'$ such that $\alpha_{n+1}>\alpha_n$ and $f_{\alpha_n}|_{[\alpha_{n+1},\omega_1]}$ is constant.
\end{itemize}
Since $L$ is weakly compact, the sequence $(f_{\alpha_n})$ has a subsequence pointwise converging to a continuous function $f$.
Let $\alpha=\sup_n\alpha_n$, Then $f_{[\alpha,\omega_1]}$ is constant and $\abs{f(\theta)-f(\omega_1)}\ge \varepsilon$ for $\theta\in(\beta,\alpha)$. Since $\alpha$ is a limit ordinal, this contradicts the continuity if $f$.

So, we have proved that $g\in\overline{B\cap A_n}^{\mu(X^*,X)}$. Since the cover $(A_n)$ was arbitrary, we deduce that $g\in\Imn(A)$.
Thus
$$\overline{A}^{\norm{\cdot}}\subsetneqq \Imn(A).$$

We note that $\cl_{IC}(A)=X^*$ as well. Indeed, assume that $g\in X^*\setminus A$. Let $\Gamma$ and $B$ be as above. If $A=\bigcup_n A_n$, there is some $n\in \en$ such that $B\cap A_n$ is uncountable. Further, let $A_n=\bigcup_m A_{n,m}$. Then there is some $m$ such that $B\cap A_{m,n}$ is uncountable. Then $g\in\wscl{B\cap A_{m,n}}$. Since $(A_{n,m})$ is an arbitrary cover of $A_n$, we deduce that $g\in\Icl(A_n)\subset\Ienv(A_n)$. Since $(A_n)$ is an arbitrary cover of $A$, we conclude that $g\in\cl_{IC}(A)$.
\end{example2} 

The next example shows, that the seemingly natural closure operator $\Icl$ (which equals $\Iccl$) in general does not yield the positive answer to Question~\ref{q1}. However, we point out that we do not know whether in this case $\Icl$ coincides with $\cl_{IF}$.

\begin{example2} \label{ex:Pd[0,1]}
Let $X=\C([0,1])$ and let $A$ be the set of countably supported probability measures on $[0,1]$, considered as a subset of $\C([0,1])^*$.

Since $A$ contains all Dirac measures which are exactly extreme points of $\P([0,1])$, by \cite[Theorem 2.3]{FL} we deduce that $\Ienv(A)=\wscl{A}=\P([0,1])$.

We are going to show that $\Icl(A)=A$. To this end, given $n\in\en$, denote by $A_n$ the set of probabilities supported by a set of cardinality at most $n$. Then each $A_n$ is weak$^*$-compact and $\bigcup_n A_n$ is norm dense in $A$. Hence, for any $\varepsilon>0$ we have
$$\Icl(A)\subset \overline{\bigcup_n A_n+\varepsilon B_{X^*}}^{\norm{\cdot}}
\subset  \overline{A+\varepsilon B_{X^*}}^{\norm{\cdot}}\subset A+2\varepsilon B_{X^*},$$
hence $\Icl(A)\subset\overline{A}^{\norm{\cdot}}=A$.
\end{example2}

\begin{ques}\label{q:remaining}
Let $X$ be a Banach space and $A\subset X^*$. Do the equalities
$$\cl_{IC}(A)=\Iccl(A),\quad \Iw(A)=\cl_{IF}(A), \quad \overline{A}^{\mu(X^*,C(X))}=\Ibl(A)$$
hold?
\end{ques}

\section{Final remarks and open problems}

The answer to the `locally convex' variant of Question~\ref{q1} is for some Banach spaces positive and for some Banach spaces negative as witnessed by the results of Section~\ref{sec:LCS}. However, a characterization of those Banach spaces for which the answer is positive  is still missing as pointed in Question~\ref{q:ell1}.
 
The `topological' variant of Question~\ref{q1} is widely open. We have no counterexample to it, but we do not have either any example for which the `topological' variant has a positive answer but the `locally convex' variant has a negative answer. In view of Theorem~\ref{T:characterization} the following problem is natural to ask:

\begin{ques}
Assume that $X$ is a Banach space and $A,B\subset X^*$ are two convex sets such that $A\cup B$ is also convex. Is
$\Ienv(A\cup B)=\Ienv(A)\cup\Ienv(B)$?
\end{ques}

Further, Question~\ref{q:bounded} suggests that there may be a substantial difference between (I)-envelopes of bounded and unbounded sets. It seems that the following question is interesting and open.

\begin{ques}
Let $X$ be a Banach space and $A\subset X^*$. Is the set
$$\Ienv_b (A)=\bigcup_{n\in\en} \Ienv (A\cap n B_{X^*})$$
norm-dense in $\Ienv(A)$?
\end{ques}

Note that obviously $\Ienv_b(A)=\Ienv (A)$ if $A$ is bounded. For unbounded set $A$ this equality may or may not hold as witnessed by the following example.

\begin{example2}
(1) Let $X=(\ell^\infty)^*$ and $A=\ell^\infty$ canonically embedded into $X^*=(\ell^\infty)^{**}$. It follows from \cite[Example 4.1]{Ienv-israel} that $\Ienv_b(A)=\Ienv(A)=X^*$.

(2) More generally, let $Y$ be a $c$-Grothendieck space for some $c\ge1$ (see \cite{HB-groth}). Let $X=Y^*$ and consider $Y$ canonically embedded into $X^*=Y^{**}$. It follows from 
\cite[Proposition 2.2]{HB-groth} that  $\Ienv_b(Y)=\Ienv(Y)=X^*$.

Examples of $1$-Grothendieck spaces include $\ell^\infty(\Gamma)$ for any set $\Gamma$, $L^\infty(\mu)$ for a $\sigma$-finite measure $\mu$ and quotients of these spaces (see \cite[Section 4]{HB-groth}) and also $\C(K)$ spaces from the class addressed in \cite{lechner-1gr}.

(3) Let $Y$ be a Grothendieck space which is not $c$-Grothendieck for any $c\ge1$. Such a space is provided in \cite[Theorem 1.2]{HB-groth}. Let $X=Y^*$ and consider $Y$ canonically embedded into $X^*=Y^{**}$. By \cite[Theorem 2.1]{Ienv-studia} we have
$\Ienv(Y)=X^*$. However, it follows from \cite[Proposition 2.2]{HB-groth} that  $\Ienv_b(Y)\subsetneqq X^*$ as $\Ienv_b(Y)$ is of first category in $X^*$. It seems not to be clear whether $\Ienv_b(Y)$ is norm-dense in $X^*$.
\end{example2}

Another problem concerns a quantitative version of Lemma~\ref{L:separ}. If $X$ is a Banach space and $A\subset X^*$, define for $\eta\in X^*$ the following two quantities:
$$\begin{aligned}
d_1(\eta)&=\dist(\eta,\Ienv(A)),\\
d_2(\eta)&=\sup\left\{\inf_{n\in\en}\Re\eta(x_n)-\sup_{\xi\in A}\limsup_{n\to\infty}\Re\xi(x_n)\setsep (x_n)\subset B_X\right\}.
\end{aligned}$$
It follows from Lemma~\ref{L:separ} that $d_1(\eta)=0$ if and only if $d_2(\eta)=0$. It is easy to check that $d_2(\eta)\le d_1(\eta)$: If $\zeta\in\Ienv(A)$ and $(x_n)\subset B_X$, then
$$\begin{aligned}
\inf_{n\in\en}\Re\eta(x_n)-&\sup_{\xi\in A}\limsup_{n\to\infty}\Re\xi(x_n)\\&
=\inf_{n\in\en}\Re\eta(x_n)-\limsup_{n\to\infty}\Re\zeta(x_n)+\limsup_{n\to\infty}\Re\zeta(x_n)-\sup_{\xi\in A}\limsup_{n\to\infty}\Re\xi(x_n)\\
&\le \inf_{n\in\en}\Re\eta(x_n)-\limsup_{n\to\infty}\Re\zeta(x_n)
\le \liminf_{n\to\infty} \Re (\eta(x_n)-\zeta(x_n))\le \norm{\eta-\zeta}.
\end{aligned}$$
It follows that $d_2(\eta)\le \norm{\eta-\zeta}$ for any $\zeta\in\Ienv(A)$, so indeed $d_2(\eta)\le d_1(\eta)$. 
Validity of a kind of converse is an open problem.

\begin{ques} 
Is there a universal constant $c\ge1$ such that $d_1\le c\cdot d_2$?
\end{ques}

It seems that a solution to this question would substantially
deepen the understanding of (I)-envelopes. It follows from the proof of Lemma~\ref{L:separ} that
$$d_2(\eta)=\inf \left\{ \dist \left(\eta,\overline{\co\bigcup_{n=1}^\infty \wscl{\co A_n}}^{\norm{\cdot}}\right)\setsep A=\bigcup_{n=1}^\infty A_n
\right\}.$$
It easily follows that $d_1=d_2$ whenever $A$ is norm-separable, but the general case is widely open.

We finish by formulating the following intriguing problem on (I)-envelopes:

\begin{ques}
Let $X$ be a separable Banach spaces. Is the dual unit ball $B_{X^*}$ equal to the (I)-envelope of the set of its weak$^*$-exposed points?
\end{ques}

This question is closely related to the problem asked by G.~Godefroy in \cite[Question V.1]{godefroy87}. Indeed, the quoted question asks whether the norm-closed convex hull of the set of weak$^*$-exposed points of $B_{X^*}$ is the whole unit ball provided $X$ is separable and does not contain an isomorphic copy of $\ell^1$. In view of Theorem~\ref{T:X not containing ell1} (see also \cite[Proposition 2.2(b)]{FL}) it is a special case of our last question.

\bibliography{ienv}\bibliographystyle{acm}
\end{document}